\ifpdf \usepackage[pdftex]{graphicx} \pdfcompresslevel=9
\else \usepackage[dvips]{graphicx} \fi
 \author[K. Kar{\v c}iauskas \& J. Peters]
 {K{\c e}stutis Kar{\v c}iauskas$^a$ and J\"org Peters$^b$\\
 $^a$ Vilnius University\quad $^b$ University of Florida}
\newcommand{\chg}[1]{\textcolor{Brown}{#1}}
\newcommand{\figref}[1]{Fig.~\ref{#1}}
\newcommand{\secref}[1]{Section~\ref{#1}}
\newcommand{\ga}{\alpha}   
\newcommand{\gb}{\beta}
\newcommand{\gl}{\lambda} 
\newcommand{\gs}{\sigma}
\newcommand{\charm}{\chi}  
\newcommand{\NN}{N}  
\newcommand{\bb}{\mathbf{b}}   
\newcommand{\bc}{\mathbf{c}}   
\newcommand{\bp}{\mathbf{p}}   
\newtheorem{prop}{Proposition}
\def\val{n}  
\newcommand{\IL}{\textit{left}}
\newcommand{\IR}{\textit{right}}
\newcommand{\IB}{\textit{bottom}}
\newcommand{\IT}{\textit{top}}
\def\bbr{\mathbb {R}}
\def\ac{\mathsf{c}}
\def\bpg{\grave{\mathbf{p}}}
\def\bpa{\acute{\mathbf{p}}}
\newcommand{\dg}{d}
\newcommand{\pc}[4]{\pmb{[}#1\pmb{]}^{#2}_{#3}#4}
\def\mmm{\mathbf{m}}
\def\ggg{\mathbf{g}}
\newcommand{\gfree}{{\mathcal{P}}}
\def\mmm{m}
\newcommand{\kbar}{\bar{k}}
\newcommand{\khat}{\hat{k}}
\newcommand{\nnp}{n^+}
\newcommand{\GGG}{\mathbf{G}}
\newcommand{\eigv}{w}
\newcommand{\sixcap}{\mathbf{\hat g}}
\newcommand{\bfg}{\grave{\mathbf{f}}}
\newcommand{\bfa}{\acute{\mathbf{f}}}
\newcommand{\xeq}{eq}
\newcommand{\dap}{d}
\newcommand{\gss}{guided subdivision surface}
\newcommand{\hl}{highlight line}
\newcommand{\hld}{\hl\ distribution}
\newcommand{\CCa}{Catmull-Clark}
\newcommand{\fn}{\bp}
\newcommand{\bz}[2]{\fn^{#1}_{#2}}
\newcommand{\eon}{irregular node}
\newcommand{\cnet}{c-net}
\newcommand{\KK}{\kappa}
\newcommand{\xxb}{\mathbf{x}}
\newcommand{\charmt}{\tilde\chi}
\newcommand{\llk}{l}
\begin{document}
\def\wid{0.2\linewidth}
\def\wids{0.15\linewidth}
\def\skp{\hskip 0.05\linewidth}
\teaser{
  \centering
   \subfigure[control net]{
   \includegraphics[width=\wid]{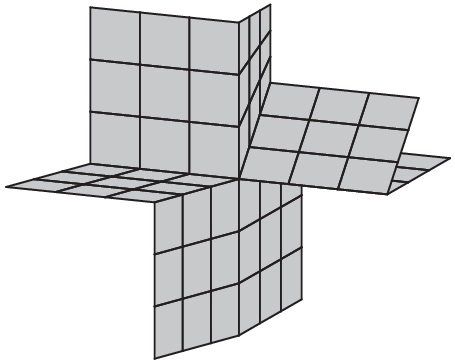}
   }
   \subfigure[guide]{
   \includegraphics[width=\wid]{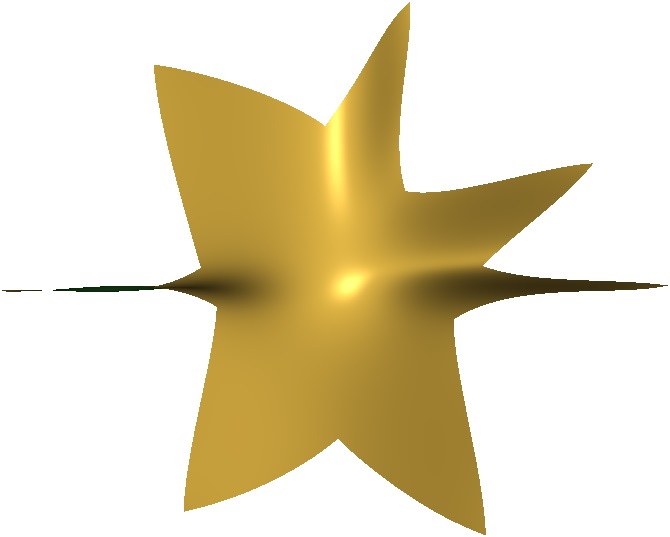}
   }
   \subfigure[subdivision rings]{
   \includegraphics[width=\wids]{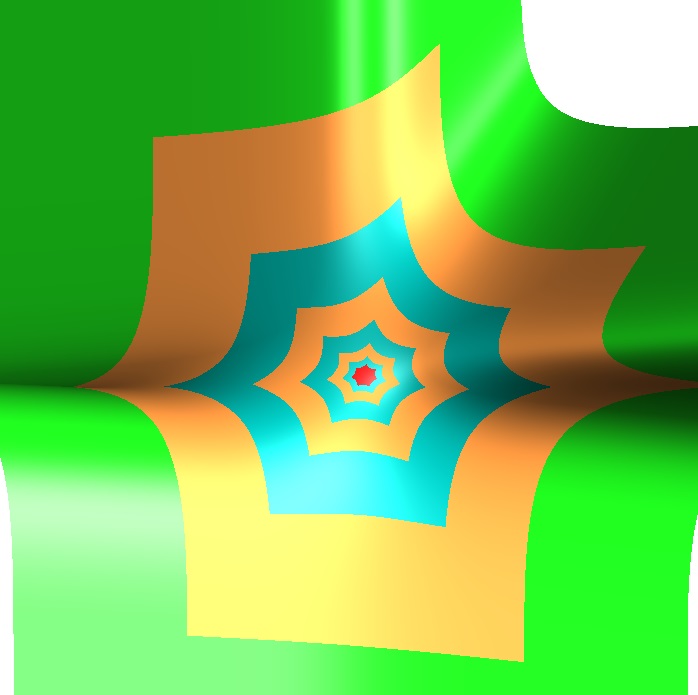}
   }
   \skp
   \subfigure[embossed detail]{
   \includegraphics[width=\wids]{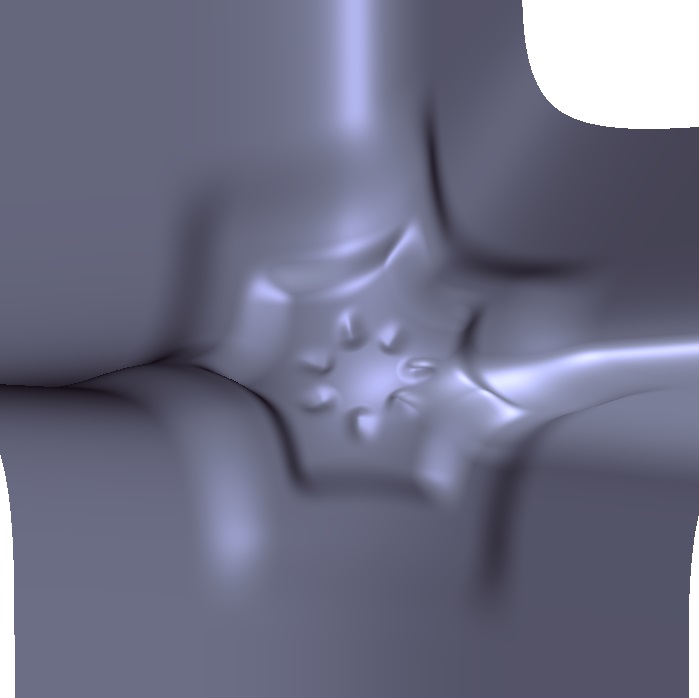}
   }
   \subfigure[\CCa]{
   \includegraphics[width=\wids]{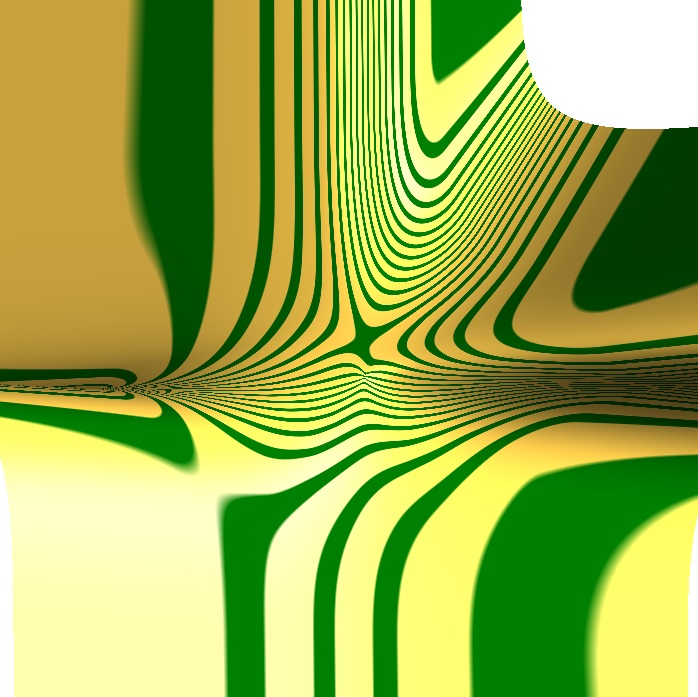}
   \skp
   \includegraphics[width=\wid]{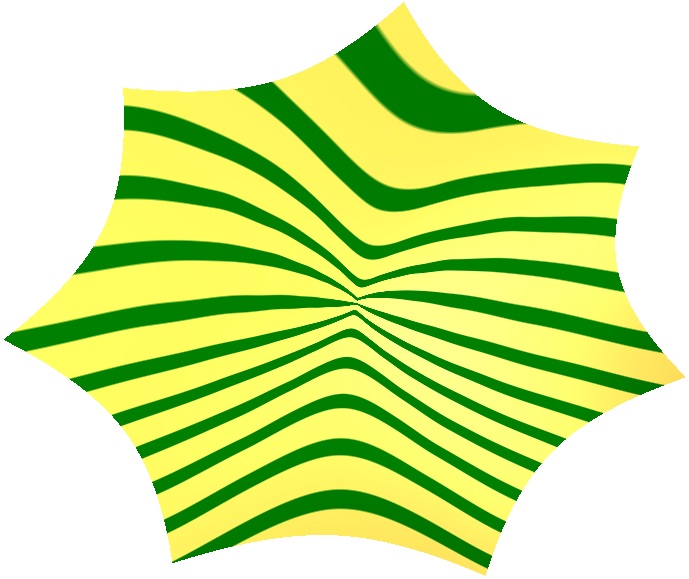}
   }
   \subfigure[guided subdivision]{
   \includegraphics[width=\wids]{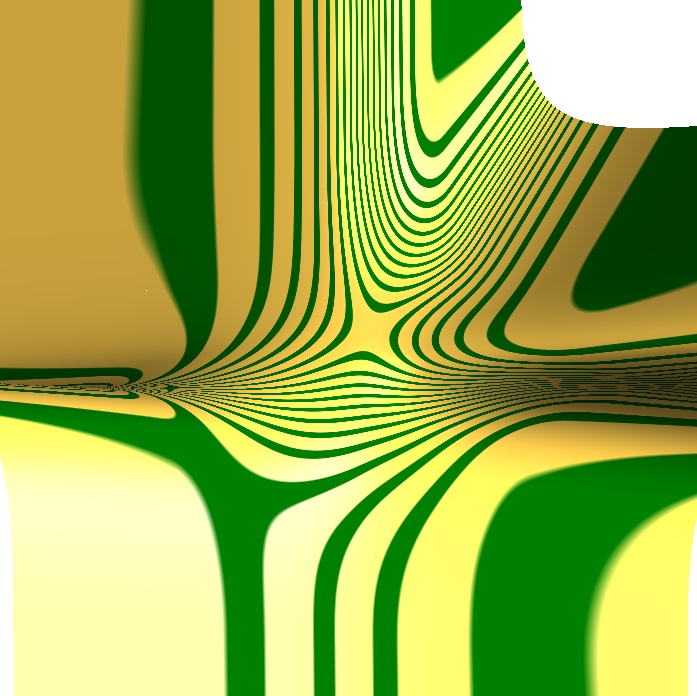}
   \skp
   \includegraphics[width=\wid]{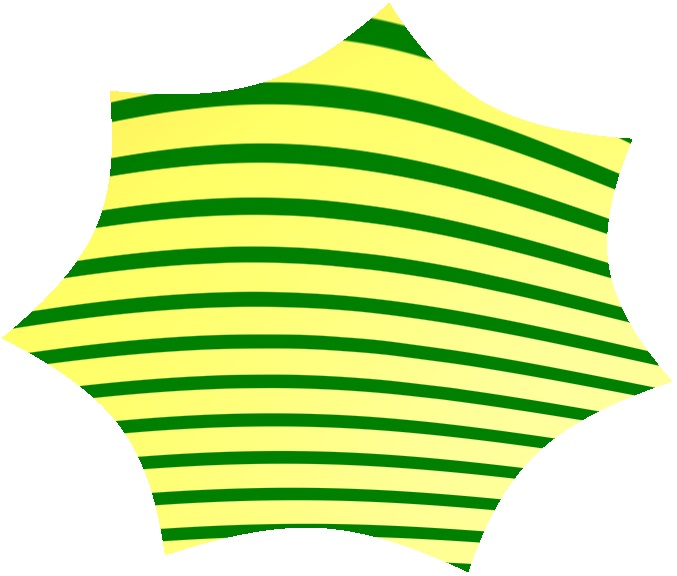}
   }
  \caption{Construction and comparison of guided subdivision: 
  (a) input net defining in (c) the irregular subdivision region and 
  a regular (green) bi-3 region; (b) guide
  surface that does not match the bi-3 spline but defines
  the shape; (c) six subdivision rings (alternating gold and cyan)
  capped by a finite polynomial (red) surface cap;
  (d) embossing exploiting the degrees of freedom in 
  the subdivision rings.
  (e) \CCa\ vs (f) guided subdivision: improvement of the \hld. 
   }
\label{fig:teaser}
}
\title[Guided Subdivision]%
{Guided subdivision surfaces: modeling, shape and refinability}

\maketitle

\begin{abstract}
Converting quad meshes to smooth manifolds,
guided subdivision offers a way to combine the good \hld s of
recent G-spline constructions with the refinability of 
subdivision surfaces.
Specifically, we present a $C^2$ subdivision algorithm of
polynomial degree bi-6 and a curvature bounded algorithm
of degree bi-5. 
We prove that the common eigen-structure of this class of
subdivision algorithms is determined by their guide and 
demonstrate that the eigenspectrum (speed of contraction) 
can be adjusted without harming the shape.
\begin{classification} 
\end{classification}
\end{abstract}

\section{Introduction}
The automatic conversion of regular quad meshes 
into $C^2$ surfaces of good shape and with built-in refinability
addresses challenges of design and engineering analysis.
Catmull-Clark subdivision offers refinability but the \hld s 
of the resulting surfaces are often deficient,
see \figref{fig:teaser}e. A decade-old technique, 
called guided subdivision \cite{Karciauskas:2006:CTM}, 
addresses in principle both shape and refinability when 
based on a high-quality guide surface. 
The underlying idea is to sample Hermite data from a 
well-shaped surface that need not match the smoothness or 
connectivity requirements of the final output surface.
However inheriting the shape of the guide is not straightforward.
A careful reparameterization is required since derivatives of the 
guide surface may not correspond to the natural
directions of the subdivision sequence of contracting
surface rings.

The proposed $C^2$ subdivision (and its visually identical
curvature-bounded counterpart of degree bi-5) leverage the guide shape of
a recent G-spline construction, \cite{Kestutis:2015:ISM}.
Highlighting the distinction between shape and mathematical
smoothness, the derived guide neither matches the 
surface surrounding the subdivision construction nor does
it generate $C^2$ surfaces; yet the derived \gss\ is 
$C^2$ up to the central point; and it is $C^2$ at the point
if the degree is bi-6, and the curvature is bounded at the 
center if the degree is bi-5, bi-4 or bi-3.

All new (bi-6, bi-5, bi-4 or bi-3) guided subdivision algorithms have a common
simple eigenstructure that is determined by the guide. 
In particular, the eigenspectrum and with it the speed of contraction
can be adjusted without harming the shape. For example, 
the surface rings that make up the subdivision surface 
can be made to uniformly contract by 1/2 regardless
of the valence.

\subsection{Related Literature}
Starting with \cite{doo78,Catmull-1978-CC} (and \cite{Loop:1987:SSS}
for triangulations), subdivision surfaces have become dominant in the 
animation industry (see e.g.\ \cite{DeRose:1998:SSI,journals/tog/NiessnerLMD12})
and have inspired many polyhedral geometry processing algorithms
and vice versa \cite{journals/cga/WarrenS04}.
Shape control, e.g.\ via semi-smooth creases, have had a stronger impact
than formal smoothness: except in pockets of academia, extensions 
of \CCa-subdivision to $C^2$ continuity
\cite{journals/tog/Levin06,Zorin:2006:CCC,Karciauskas:2006:CTM}
have largely been ignored.

Two new developments re-kindle the interest in higher-order smooth
subdivision: isogeometric computations on surfaces and advances in
quad meshing.
Following the early lead of \cite{Cirak:2000:SSP} more recently
subdivision surfaces have been used as computational domains,
see e.g.\
\cite{barendrecht2013isogeometric,journals/cma/PanXXZ16,%
journals/amc/Riffnaller-Schiefer16,juttler:subdiv:2016}.
Quad meshing \cite{QUADSTAR2012,VCDPBHB16}
has matured over the past decade starting with
\cite{Alliez:2003,Marinov:2004,journals/cgf/KalbererNP07}
leveraging
directional fields  \cite{Myles:2014,Pietroni:2016}.
Such meshes provide natural input for 
\gss s. The use of \gss\ can be traced back to 
\cite{journals/tog/Levin06}, where the guide is a single polynomial
and \cite{Karciauskas:2006:CTM} where the guide is allowed to be any
piecewise smooth function, piecewise polynomial in particular.
The critical ingredients for good shape, measured via uniform
curvature and \hld, are a well-shaped guide and its careful sampling.




\section{Definitions and Setup}\label{sec:defset}
Analogous to \CCa-subdivision, we consider the
input a network of quadrilateral facets or \emph{quads}.
Nodes where four quads meet are called regular, otherwise \emph{\eon s}.
Our focus is on the neighborhood of \eon s:
a 2-ring of quads surrounding an \eon\ that we call a \emph{\cnet}.
The 1-ring of  a \cnet\ consists of regular nodes and 
the \cnet\ has $\val\ne 4$ sectors. For example
the interior nodes of \figref{fig:teaser}a form a \cnet.

The subdivision surface will be expressed piecewise
in terms of tensor-product polynomials $\fn$ of
bi-degree $\dg$ in Bernstein-B\'ezier (BB) form
\[
   \fn(u,v):=\sum_{i=0}^\dg\sum_{j=0}^\dg\bz{}{ij}B^\dg_i(u)B^\dg_j(v),
   \quad (u,v) \in \square:=[0..1]^2,
\]
where $B^\dg_k(t) := \binom{\dg}{k}(1-t)^{\dg-k}t^k$
are the Bernstein-B\'ezier (BB) polynomials of degree $\dg$
and $\bz{}{ij}$ are the BB coefficients \cite{Farin02,Prautzsch02}.
A central role will be played by the \emph{corner jet constructor}
\begin{equation*}
   \pc{f}{\dg}{j\times j}{(u_0,v_0)}
\end{equation*}
that expresses the expansion of a function $f$ at $(u_0,v_0)$
up to and including order $j-1$ in $u$ and $j-1$ in $v$
in BB-form of degree bi-$\dg$, i.e.\ 
by $j\times j$ BB-coefficients.
\figref{fig:herm56col}a displays four corner jet constructors
$[f]^5_{3\times 3}$ merged to form a bi-5 patch and 
\figref{fig:herm56col}b displays four corner jet constructors
$[f]^6_{4\times 4}$  merged to form a bi-6 patch by averaging the
overlapping BB-coefficients.
\def\wid{0.3\linewidth}
\begin{figure}[h]
   \centering
   \subfigure[bi-5]{
   \includegraphics[width=\wid]{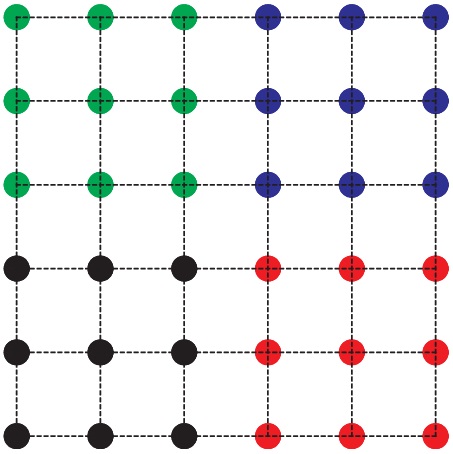}}
   \hskip1.0cm
   \subfigure[bi-6]{
   \includegraphics[width=\wid]{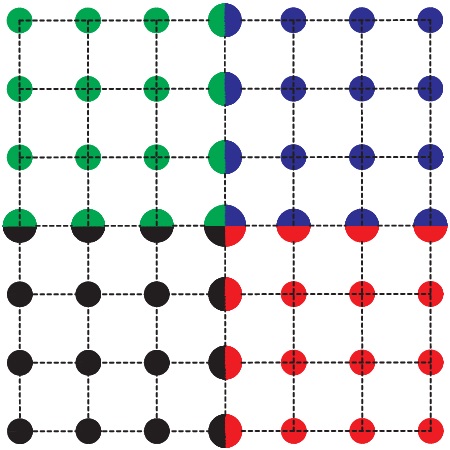}}
   \caption{
   (a) bi-5 patch assembled from $3\times 3$ jets; 
   (b) bi-6 patch assembled by averaging $4\times 4$ jets.   
   }
   \label{fig:herm56col}
\end{figure}

\def\offwid{0.4\linewidth}
\begin{figure}[h]
   \centering
      \epsfig{file=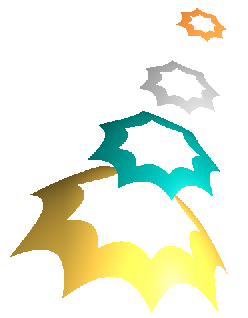,width=\offwid}
      \epsfig{file=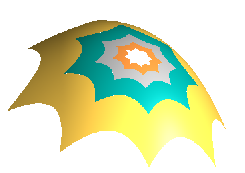,width=\offwid}
   \caption{
   (\IL) Sequence of guided subdivision rings, and
   (\IR) the resulting surface. 
   }
   \label{fig:grings}
\end{figure}

\section{Construction of guided surfaces}\label{sec:gdsurf}
The key to good shape is to construct and judiciously reparameterize
a guide surface so that the jet constructors can form well-shaped
subdivision rings, \chg{i.e.\ a sequence of surface annuli in $\bbr^3$
that join smoothly leaving an ever small central hole (see \figref{fig:grings}).}


\subsection{Guide surface}\label{sec:guide}
In this section, we create a $G^2$ guide surface $\ggg$ of degree bi-5
for filling of multi-sided hole by a series of rings sampled from $\ggg$.

Denote by $L$ the linear shear that maps a unit 
square to the unit parallelogram with opening angle $\frac{2\pi}{n}$. 
Abbreviating $\ac:=\cos\frac{2\pi}{n}$ set
$\bfg := L$ and let $\bfa$ be its reflection
across the edge $v=0$,  \figref{fig:gdtanen}a.
Then, along the common boundary $v=0$,
\begin{align}
   \partial_v\bfa+\partial_v\bfg-2\ac\partial_u\bfg=0\ .
   \label{g1const}
   \\
   \partial^2_v\bfa-\partial^2_v\bfg+4\ac\partial_u\partial_v\bfg-
   4\ac^2\partial^2_u\bfg=0.
   \label{g2const}
\end{align}
The constraints \eqref{g1const} and \eqref{g2const} 
are `unbiased' in the sense that exchanging $\bfa$ and $\bfg$
does not alter the equations.
For degree bi-5 polynomials $\bfg$ and $\bfa$, the equations \eqref{g1const}
and \eqref{g2const} yield a system of $12$ linear equations 
in the BB-coefficients $\bpg_{ij}$ and $\bpa_{ij}$.
These equations are solved symbolically, leaving as unconstrained
BB-coefficients those marked in \figref{fig:gdtanen}b
by \textcolor{red}{red} or black bullets plus one circled cross.


\begin{figure}[h]
   \centering
   \subfigure[shearing]{
   \begin{overpic}[scale=.25,tics=10]{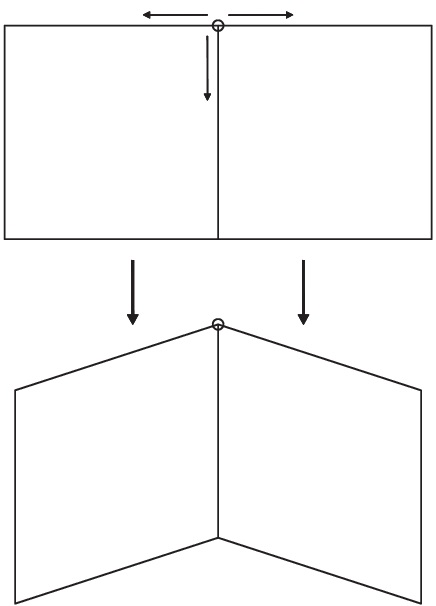}
        \put (20,98) {$v$}
	\put (49,98) {$v$}
	\put (31,80) {$u$}
	\put (16,50) {$L$}
        \end{overpic}}
   \hskip0.1\linewidth
   \subfigure[sector-symmetric index]{
   \begin{overpic}[scale=.3,tics=10]{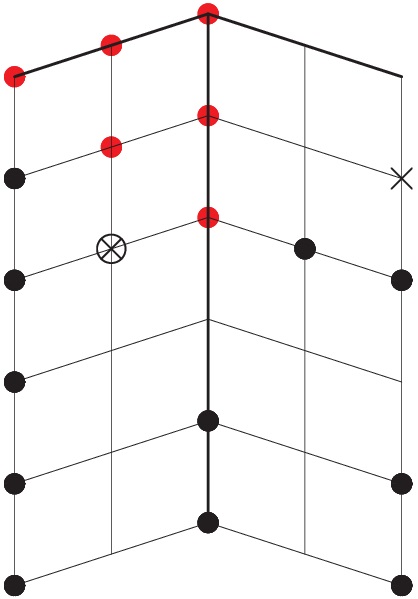}
        \put (28,100) {$00$}
	\put (14,95) {$01$}
	\put (-2,90) {$02$}
	\put (-11,43) {$\bpg:=\bp^k$}
	\put (62,43) {$\bpa:=\bp^{k+1}$}
	\put (49,94) {$01$}
	\put (65,89) {$02$}
	\put (27,82) {$10$}
	\put (-6,54) {$22$}
	\put (69,54) {$22$}
	\put (27,47) {$30$}
	\put (-6,0) {$52$}
	\put (69,0) {$52$}
	\put (31,6) {$50$}
        \end{overpic}}
   \caption{
   (a) Linear shear $L$; (b) bi-5 solution to the local $G^2$ constraints:
   the unconstrained BB-coefficients are marked as \textcolor{red}{red}
   or black bullets or a circled cross.    
   }
   \label{fig:gdtanen}
\end{figure}

The interactions between the $n$ local systems of equations
at the irregular point $\bpg_{00}$ are resolved by
selecting six BB-coefficients $\bp^k_{ij}$, $0\le i+j\le 2$ for $k=0$
and so defining a quadratic expansion at the central point;
the BB-coefficients $\bp^k_{ij}$, $0\le i+j\le 2$, for $k>0$,
are then defined recursively as
\begin{equation}
\begin{aligned}
\bpa_{00}:=& \bpg_{00}\ ,\quad \bpa_{10}:=\bpg_{10}\ ,\quad \bpa_{20}:=\bpg_{20}\ ;\\
\bpa_{01}:=& -\bpg_{01}+2\ac \bpg_{10}+2(1-\ac)\bpg_{00}\ ;\\
\bpa_{11}:=& -\bpg_{11}+\frac{8\ac}{5}\bpg_{20}+(2-\frac{6\ac}{5})\bpg_{10}-\frac{2\ac}{5}\bpg_{00}\ ;\\
\bpa_{02}:=& \bpg_{02}-5\ac\bpg_{11}+4\ac^2\bpg_{20}+(5\ac-4)\bpg_{01}+\ac(9-8\ac)\bpg_{10}\\
& +(4-9\ac+4\ac^2)\bpg_{00}\ . 
\end{aligned}
\label{cenquad}
\end{equation}
Assigning, in each local system,
\begin{align}
   \bpa_{12}:=& 
   \bpg_{12}-2\ac\bpg_{21}+2\ac\bpa_{21}+\ac\bpg_{01}+(3\ac-4)\bpg_{11}
   +\frac{4}{5}\ac(4-3\ac)\bpg_{20}
   \notag
   \\
   & +(4-\frac{27}{5}\ac+\frac{4}{5}\ac^2)\bpg_{10}
   +\frac{c}{5}(8\ac-9)\bpg_{00},
   \label{blackcross}
\end{align}
yields a circulant system of $n$ linear equations in 
$\bp^k_{12}$ (since $\bp^k_{12}=\bpg_{21}$, $\bp^{k+1}_{12}=\bpa_{12}$;
see rotationally-symmetric indexing in \figref{fig:shearglob}). This
system has a unique solution (unless $n=3,6$ when one $\bp^k_{12}$,
say $\bp^0_{12}$, is additionally free to choose).
The explicit expressions for 
$\bpg_{30}$, $\bpg_{31}$, $\bpa_{31}$, $\bpa_{32}$, $\bpg_{41}$, $\bpa_{41}$,
$\bpg_{51}$, $\bpa_{51}$ are presented in Appendix A.

\begin{figure}
        \centering
	\begin{overpic}[scale=.35,tics=10]{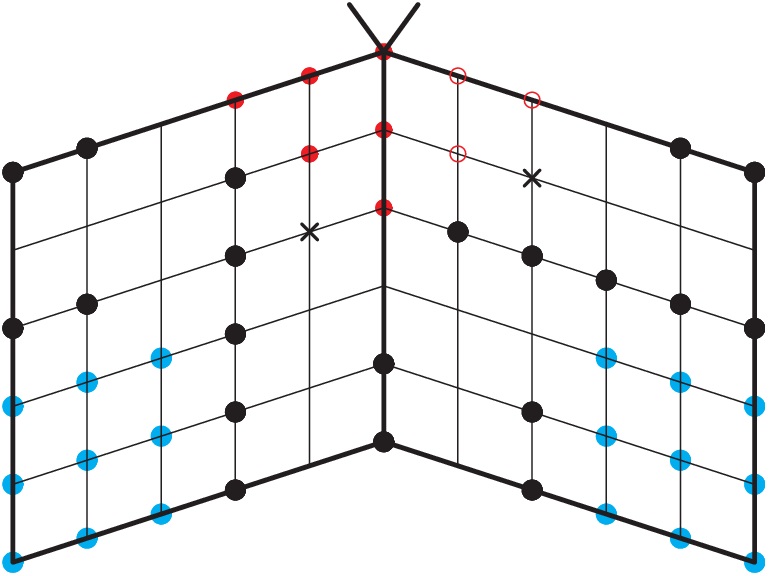}
        \put (47,72) {$00$}
	\put (37,67) {$10$}
	\put (27,64) {$20$}
	\put (43,48) {$02$}
	\put (51,48) {$20$}
	\put (58,67) {$01$}
	\put (68,64) {$02$}
	\put (-6,0) {$55$}
	\put (100,0) {$55$}
	\put (28,5) {$25$}
	\put (100,32) {$25$}
	\put (33,45) {$12$}
	\put (70,52) {$12$}
	\put (-6,22) {$53$}
	\put (75,2) {$53$}
	\put (-7,40) {$\bp^k$}
	\put (101,40) {$\bp^{k+1}$}
        \end{overpic}
        \caption{Rotationally-symmetric indexing of the set $\gfree$ of
        the $G^2$ guide $\ggg$.
        BB-coefficients marked as \textcolor{cyan}{cyan} disks 
        are unconstrained by local $G^2$ requirements.
	}
        \label{fig:shearglob}
\end{figure}

Denote by $\gfree$ the set of BB-coefficients unconstrained by 
the $G^2$ constraints.
We define $n^* :=  n+1$ whenever the valence $n \in \{3,6\}$
and $n^*=n$ otherwise.
The elements of $\gfree$ are shown in \figref{fig:shearglob}:
six \textcolor{red}{red} bullets for the quadratic expansion,
$8n+n^*$ black bullets, and, in each sector corner,
$3\times 3$ \textcolor{cyan}{cyan} bullets that
do not affect $G^2$ continuity between sectors.
The elements of $\gfree$ are pinned down to
best approximate the bi-5 surface $\GGG$ of the \cnet\
as defined in \cite{Kestutis:2015:ISM}.
Let $\gs: \bbr^{2\times n} \to \bbr^2$ be the $2\pi/n$-rotationally
symmetric map obtained by applying the algorithm 
of \cite{Kestutis:2015:ISM} 
to the control points of the planar characteristic \cnet\ of \CCa-subdivision
shown in \figref{fig:gdfit}a.
\figref{fig:gdfit}b, \emph{left}, shows one sector of $\gs$ for $n=5$ 
and 
\figref{fig:gdfit}b, \emph{right} shows $L^{-1}\circ\gs$ to be used for 
sampling the guide $\ggg$.
 
\def\wid{0.2\linewidth}
\def\widd{0.22\linewidth}
\def\widk{0.25\linewidth}
\def\widkk{0.4\linewidth}
\begin{figure}[h]
   \centering
   \subfigure[\cnet]{
   \includegraphics[width=\wid,height=\wid]{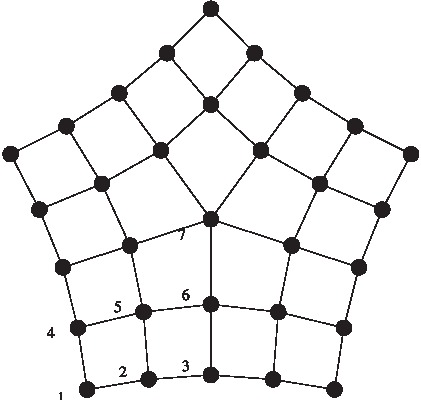}
   }
   \hskip0.5cm
   \subfigure[a sector of $\gs$ and its image under $L^{-1}$]{
   \includegraphics[width=\wid]{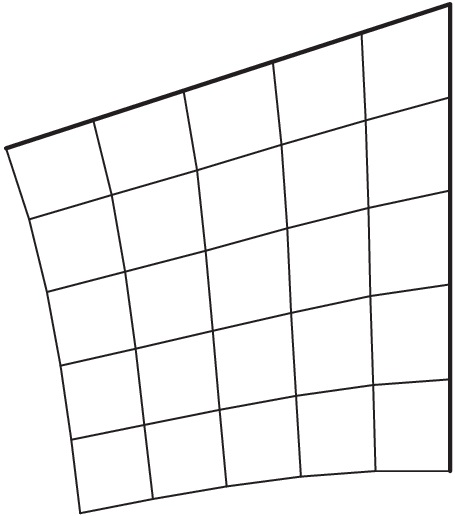}
   \hskip1.0cm
   \includegraphics[width=\widd]{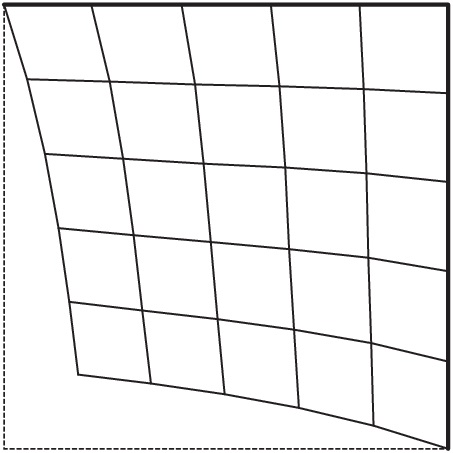}}
   \\
   \subfigure[surrounding surface and $\GGG$]{
   \includegraphics[width=\widkk]{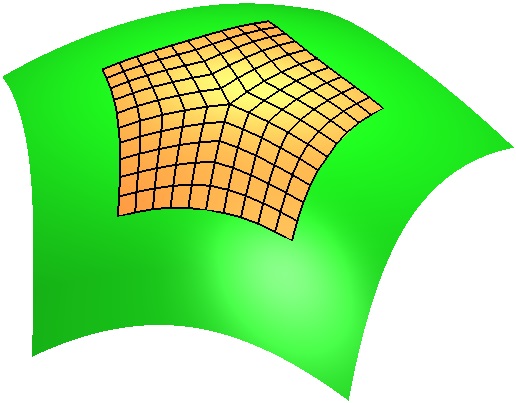}}
   \hskip1.0cm
   \begin{minipage}[b]{\widk}
   \vskip0.05cm
   \subfigure[bi-5 guide $\ggg$]{
   \includegraphics[width=\linewidth]{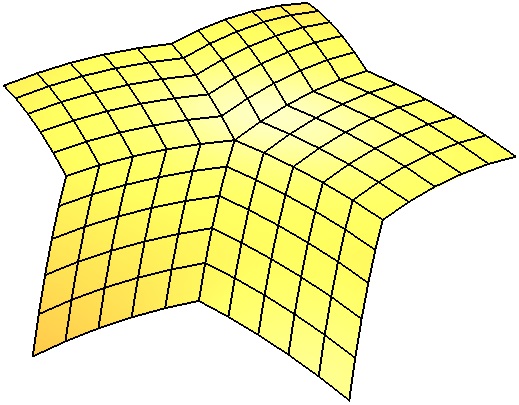}}
   \vskip0.22cm
   \phantom{.}
   \end{minipage}
   \caption{
   {Construction of the guide surface.}
   (c) 
   the surrounding \textcolor{green}{green} bi-3 surface and 
   in gold the bi-5 surface $\GGG$ of \cite{Kestutis:2015:ISM}.
   (d) The net of control points $\ggg^k_{ij}$  (not of final $\bb^k_{ij}$).
   }
   \label{fig:gdfit}
\end{figure}

Setting the central point of $\ggg$ to the central point of $\GGG$, 
we collect the $\textcolor{red}{5}+8n+n^*+\textcolor{cyan}{9n}$=$17n+n^*+5$ 
unconstrained BB-coefficients of the set $\gfree$.
Sampling the $k$th sector according to
$[\ggg^k\circ L^{-1}\circ\gs]^5_{3\times 3}$ 
at all four corners of each unit square sub-domain of $\gs$, 
we obtain the BB-coefficients $\bb^k_{ij}$,
$k=0,\ldots,n-1$, $i,j=0,\ldots,5$. The $\bb^k_{ij}$ are linear
expressions in $\gfree$ and we assemble them to form $n$ patches
$\bb^k$ of degree bi-5. Then $\gfree$ is the least-squares solution of 
minimizing the sum of differences between each $\bb^k_{ij}$ 
and its corresponding coefficient of $\GGG$.

\subsection{Parameterization and guided surface rings}\label{sec:gdrings}

The guide $\ggg$ was constructed to facilitate filling
the multi-sided hole by a contracting sequence of sampled guided 
rings.
The construction, see \figref{fig:gdsampnew},
leverages the fact that the characteristic ring $\charm$
of \CCa\ subdivision joins $C^2$ to its scaled copy $\gl\ \charm$, 
where $\gl:=\frac{1}{16}\big(\ac+5+\sqrt{(\ac+1)(\ac+9)}\big)$
is the subdominant eigenvalue of \CCa\ subdivision.
Since the shear $L$ is linear, also $L^{-1}\circ\charm$ is $C^2$-connected 
to its scaled copy $\gl(L^{-1}\circ\charm)$
(see \figref{fig:gdsampnew}d).
Moreover, the outer second-order Hermite data of $L^{-1}\circ\charm$ 
(underlaid gray in \figref{fig:gdsampnew}a) is determined by 
the $C^2$ prolongation $L^{-1}\circ(\lambda^{-1}\charm)$ and
binary splitting  (see \figref{fig:gdsampnew}c).
\def\wid{0.3\linewidth}
\begin{figure}[h]
   \centering
   \subfigure[image of $\charm$ under $L^{-1}$]{
   \includegraphics[width=\wid]{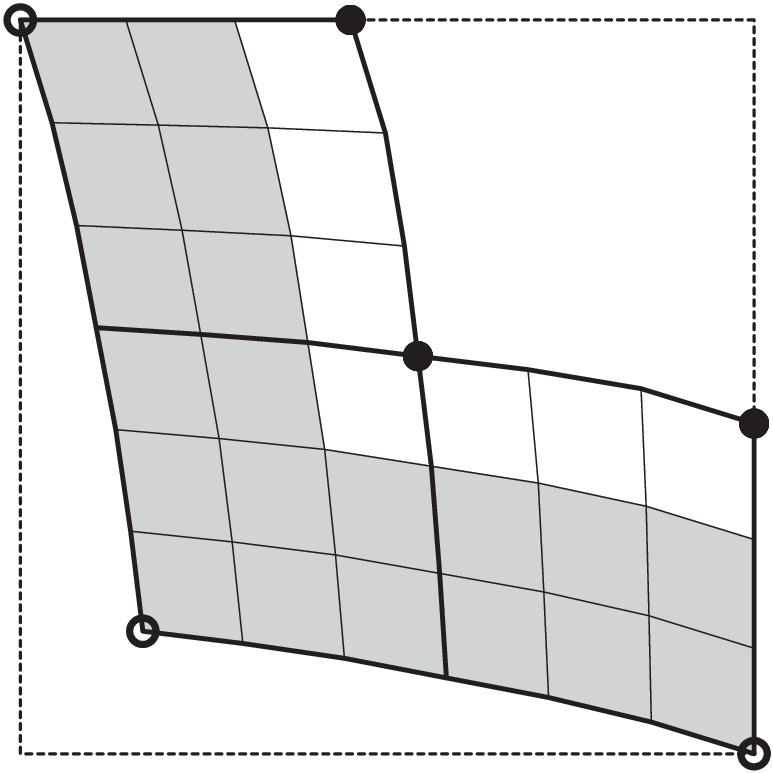}}
   \hskip0.5cm
   \subfigure[$L^{-1}\circ \charmt$]{
   \includegraphics[width=\wid]{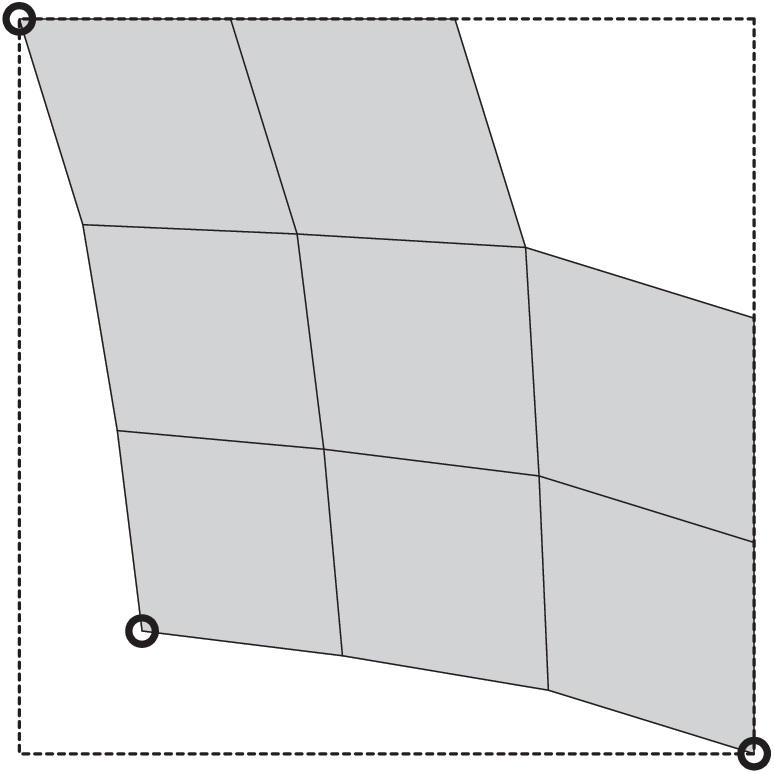}}\\
   \subfigure[sampled bi5 sector]{
   \includegraphics[width=\wid]{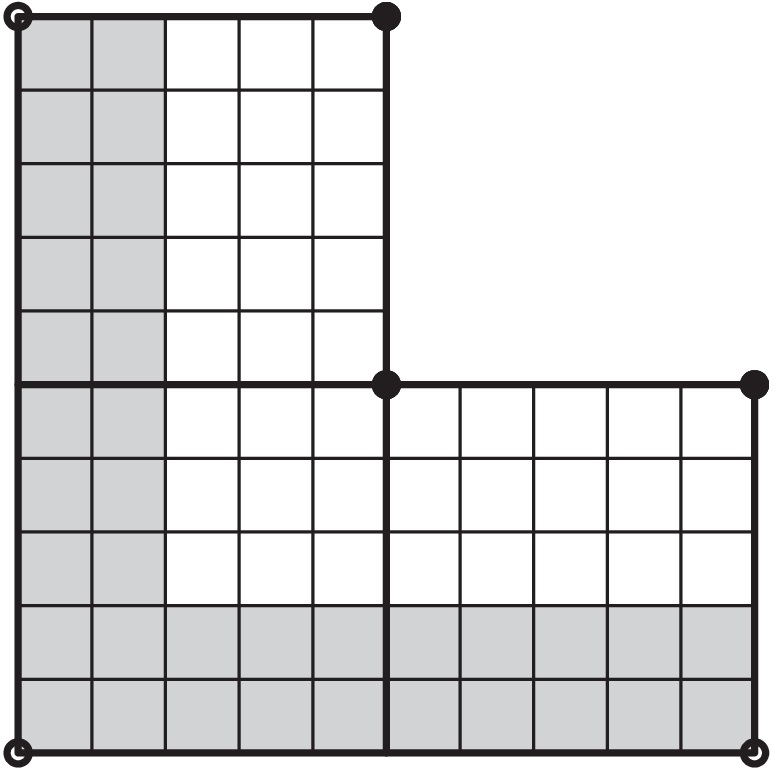}}
   \hskip0.5cm
   \subfigure[$L^{-1}\circ\charm$ scaled by $\gl$]{
   \includegraphics[width=\wid]{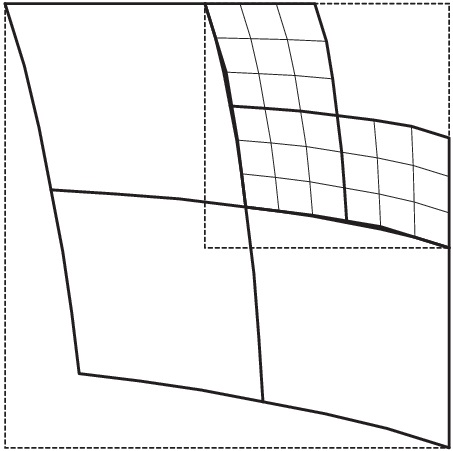}}
   \caption{
   \textbf{Construction of guided ring.}
   }
   \label{fig:gdsampnew}
\end{figure}

\noindent
\textbf{Subdivision Algorithm.}\\
Input: a sector $\ggg^k$ of guide $\ggg$.\\
Output: BB-coefficients of three bi-$d$ ($d=5$ or $d=6$) patches
in the $k$-th sector of the $l$th subdivision ring $\xxb^\llk$, 
$\llk=0,\ldots$.\\
Bi-5 Algorithm:
sample $[\ggg^k\circ(L^{-1}\circ\gl^\llk\charm)]^5_{3\times 3}$ 
at the inner corners marked by bullets in \figref{fig:gdsampnew}a 
to obtain the bi-5 coefficients of \figref{fig:gdsampnew}c
underlaid white;
sample $[\ggg^k\circ(\gl^\llk\charmt)]^5_{3\times 3}$ 
at the outer corners marked as by circles in \figref{fig:gdsampnew}b 
and mid-split the resulting bi-5 data 
to obtain the bi-5 coefficients of \figref{fig:gdsampnew}c
underlaid gray.
The first of these gray sets of BB-coefficients ($\llk=0$) is replaced by 
$C^2$ prolongation from the surrounding surface.

\noindent
The bi-6 algorithm is identical, except that 
$[\ggg^k\circ(L^{-1}\circ\gl^\llk\charm)]^6_{4\times 4}$ is
sampled and assembled according to \figref{fig:herm56col}b.

\begin{prop}
The Subdivision Algorithm fills a multi-sided hole in 
a spline surface $C^2$ up to the central point.
If the degree is bi-6, the surface is $C^2$;
if the degree is bi-5, the surface is $C^2$ 
except for the central point where it is $C^1$ and curvature bounded.
\end{prop}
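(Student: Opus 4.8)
The plan is to prove the statement in two stages --- $C^2$ continuity away from the central point, valid for both degrees, and the limiting behaviour at the central point, where the two degrees must be distinguished --- and to reduce both stages to properties of the reparameterized guide $\ggg\circ r_\llk$ with $r_\llk:=L^{-1}\circ\gl^\llk\charm$. The one fact I would isolate first is that, away from the origin, $\ggg\circ r_\llk$ is a genuinely $C^2$ surface on the closed annular domain of ring $\llk$: the guide $\ggg$ is $G^2$ across its sector boundaries, and since that $G^2$ structure (constraints \eqref{g1const}--\eqref{g2const}) was built with the same shear $L$ and angle data $\ac=\cos\frac{2\pi}{n}$ that define the characteristic map, composition with the reparameterization $L^{-1}\circ\charm$ matches the two reparameterizations and turns the $G^2$ joins into $C^2$ joins.

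With this in hand, $C^2$ continuity everywhere except possibly at the center follows by checking three kinds of shared boundary. Within a ring: adjacent patches share corners at which both reproduce the $C^2$ Hermite data of $\ggg\circ r_\llk$ --- the same function inside a sector, and $C^2$-compatible functions across a sector boundary by the first fact --- and a $3\times3$ (resp.\ $4\times4$) corner jet fixes, along each edge, the boundary curve together with its first and second transversal-derivative curves as degree-$\dg$ curves determined entirely by the two shared corner jets; hence agreement at the two shared corners forces agreement to second order along the whole edge. Between consecutive rings $\llk$ and $\llk+1$: the outer data of ring $\llk$ is, by construction, the mid-split of $[\ggg^k\circ(\gl^\llk\charmt)]$, i.e.\ the $C^2$ prolongation of the inner sampling; since $\charm$ joins $C^2$ to $\gl\charm$ and $L$ is linear, this coincides with the inner Hermite data of $\ggg\circ r_{\llk+1}$, so the two rings reproduce matching second-order data on their shared boundary. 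Between ring $0$ and the surrounding bi-3 spline the algorithm explicitly substitutes the $C^2$ prolongation of that spline, so that join is $C^2$ by construction. This settles the first sentence of the statement for both degrees.

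For the central point I would use the self-similar structure: because $r_{\llk+1}$ is $r_\llk$ scaled by $\gl$ (up to the fixed linear $L$), the rings obey a linear recurrence whose eigenstructure is read off from the Taylor expansion of $\ggg\circ L^{-1}$ at the origin, its constant, linear, and quadratic homogeneous parts contracting with factors $1$, $\gl$, $\gl^2$; the linear part is the tangent plane and the quadratic part $p_2$ is fixed by the six central BB-coefficients of \figref{fig:shearglob}. Regularity and injectivity of the \CCa\ characteristic map give $\gl^{-\llk}(\xxb^\llk-p_0)\to p_1(\charm)$, hence a single limiting tangent plane and $C^1$ at the center for both degrees, and the sub-subdominant scaling being exactly $\gl^2$ makes the curvature bounded for both degrees. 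The degrees part ways only in whether the $\gl^2$-eigencomponent reproduces $p_2$ without defect: the bi-6 algorithm samples $4\times4$ jets and averages, representing this component exactly in every ring, so $\gl^{-2\llk}(\xxb^\llk-p_0-\gl^\llk p_1)$ converges in $C^2$ to a single quadratic $p_2(\charm)$ and the surface is $C^2$; the bi-5 algorithm samples only $3\times3$ jets into a lower-degree patch, leaving a bounded but non-constant defect, so the second fundamental form stays bounded yet does not converge, leaving the surface $C^1$ but not $C^2$ at the center.

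The main obstacle is precisely this last second-order estimate. Concretely one must bound, uniformly in $\llk$, the second derivatives of the sampling defect $\xxb^\llk-\ggg\circ r_\llk$ in the intrinsic (characteristic-map) metric, and show that the $4\times4$-jet averaging cancels its leading contribution to the order needed for a convergent second fundamental form, whereas the $3\times3$-jet sampling leaves a residual cross-boundary jump of the sampled second derivative that is bounded but does not tend to zero. The homogeneity $r_{\llk+1}=\gl\,r_\llk$ reduces this to a single estimate on the fixed jet-constructor stencils, but carrying it out requires the explicit reproduction order of each constructor against the composed map $\ggg\circ L^{-1}\circ\charm$, which is where the real work lies.
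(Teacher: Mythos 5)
Your proposal is correct and, for everything away from the limit point, follows the same route as the paper: sector-to-sector $C^2$ because the guide $\ggg$ and the shear $L$ satisfy the \emph{same} relations \eqref{g1const}--\eqref{g2const} so that composing with $L^{-1}\circ\charm$ cancels the $G^2$ mismatch, and ring-to-ring $C^2$ by the $C^2$ prolongation built into the outer (gray) jets; you merely spell out the jet-agreement-along-edges argument that the paper leaves implicit. The genuine divergence is at the central point: the paper disposes of it in one line by invoking \cite[Thm 1]{Karciauskas:2006:CTM} applied to $(\ggg\circ L^{-1})\circ\charm$, whereas you reconstruct that theorem's content from the homogeneous Taylor decomposition $\ggg\circ L^{-1}=p_0+p_1+p_2+\cdots$ and the scaling $\gl^{\llk}$. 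Your reconstruction identifies the right mechanism, but you mislocate the difficulty: no uniform ``sampling defect'' estimate or cancellation analysis is needed. Since $\charm$ is piecewise bi-3 and $p_2$ is quadratic, $p_2\circ\charm$ is piecewise bi-6, and the $4\times 4$ jet constructor of degree $6$ reproduces bi-6 polynomials exactly (the overlapping coefficients agree, so the averaging is the identity); hence the $\gl^2$-component of every bi-6 ring \emph{is} $p_2\circ\charm$, which is the standard sufficient condition for $C^2$ at the extraordinary point. For bi-5, curvature boundedness likewise needs no estimate: the $\gl^2$-component is the fixed, bounded, $C^1$ ring $H(p_2\circ\charm)$ rescaled by exactly $\gl^{2\llk}$, which is the textbook criterion for bounded curvature. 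What your argument (and, in fairness, the paper's one-line citation) leaves unverified is the negative half of the bi-5 claim --- that $H(p_2\circ\charm)\ne p_2\circ\charm$, i.e.\ that the $3\times 3$ bi-5 sampler genuinely fails to reproduce the piecewise bi-6 function, so the center is not $C^2$; degree counting makes this plausible but it requires an explicit check. Net effect: your route is more self-contained and more informative about \emph{why} the degrees part ways, at the cost of promising an analytic estimate that exact polynomial reproduction renders unnecessary.
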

\begin{proof}
Since $\charm$ is $C^2$ and 
\chg{
since the same $G^1$ and $G^2$ constraints \eqref{g1const} and \eqref{g2const}
tie together adjacent sectors of $\ggg$ and tie together adjacent sectors of 
$L$,}
adjacent sectors are $C^2$-connected. Due to prolongation, 
adjacent rings are $C^2$-connected and so is the first ring to 
the surrounding spline surface. The limit point characterization
follows by the arguments of 
\cite[Thm 1]{Karciauskas:2006:CTM} applied to 
$(\ggg\circ L^{-1})\circ\charm$.
\end{proof}

Since the guide $\ggg$ closely approximates the surface
$\GGG$ constructed according to \cite{Kestutis:2015:ISM}, 
the good shape of $\GGG$ is retained.

\noindent
\textbf{Efficient Implementation.} 
Computing $\pc{\ggg^k\circ(L^{-1}\circ\gl\charm)}{5}{3\times 3}{}$ 
for the bi-5 Subdivision Algorithm
is equivalent to (i) linearly mapping $S:[0..1]^2 \to [0..\gl]^2$ 
and (ii) sampling 
$\pc{(\ggg^k\circ S)\circ(L^{-1}\circ\charm)}{5}{3\times 3}{}$ and
$\pc{(\ggg^k\circ S)\circ(L^{-1}\circ\charmt)}{5}{3\times 3}{}$.
In Step (i) applying de Casteljau's algorithm at $u=\gl=v$
yields the BB-coefficients of $\ggg^k\circ S$ as an affine combination 
of $\ggg^k$. This affine $6^2\times 6^2$ map is tabulated.
For (ii), a sector consisting of three 
patches of degree bi-5 is pre-computed for each valence $n$
as a table of size $3(6^2\times \cdot6^2)$, by sampling 
$\pc{\ggg^k\circ(L^{-1}\circ\charm)}{5}{3\times 3}{}$
and
$\pc{\ggg^k\circ(L^{-1}\circ\charmt)}{5}{3\times 3}{}$
for symbolic $\ggg^k_{ij}$.
Analogously, for bi-6 subdivision, we pre-compute  for each valence $n$
a table of size $3(7^2\times 6^2)$.

If the storage of the pre-computed data is a concern, one can leverage
that the majority of the entries can be derived by $C^2$ prolongation from
the previous ring and the remainder is defined by two jets 
in each sector, one of which has diagonal symmetry.
For bi-5 subdivision, this reduces the tabulatioa \chg{for each valence}
by a factor of $\sim7$
from $3(6^2\times 6^2)$ to $15*6^2$.

\chg{
Computing
$\pc{\ggg^k\circ(L^{-1}\circ\gl\charm)}{6}{4\times 4}{}$
analogously yields an efficient implementation for the 
bi-6 variant.}




\def\wid{0.3\linewidth}
\begin{figure}[h]
   \centering
   \subfigure[bi-6]{
   \includegraphics[width=\wid]{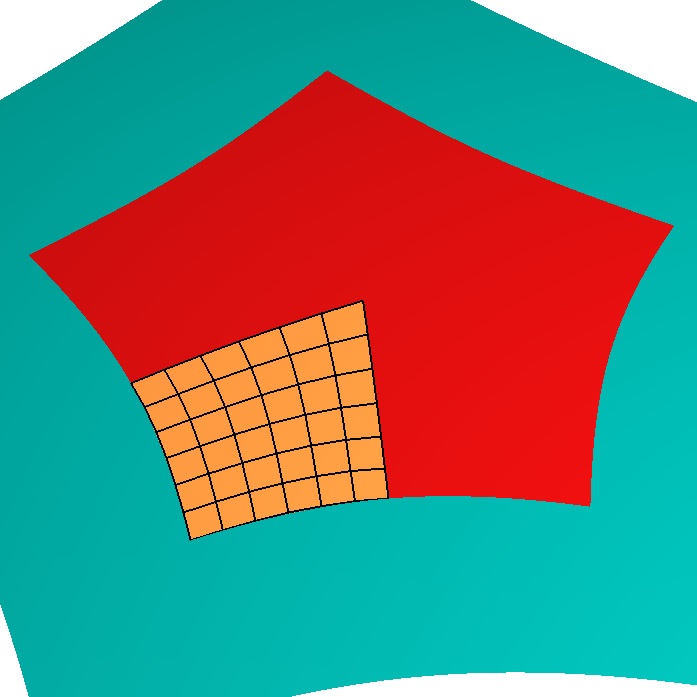}}
   \hskip1.0cm
   \subfigure[bi-5]{
   \includegraphics[width=\wid]{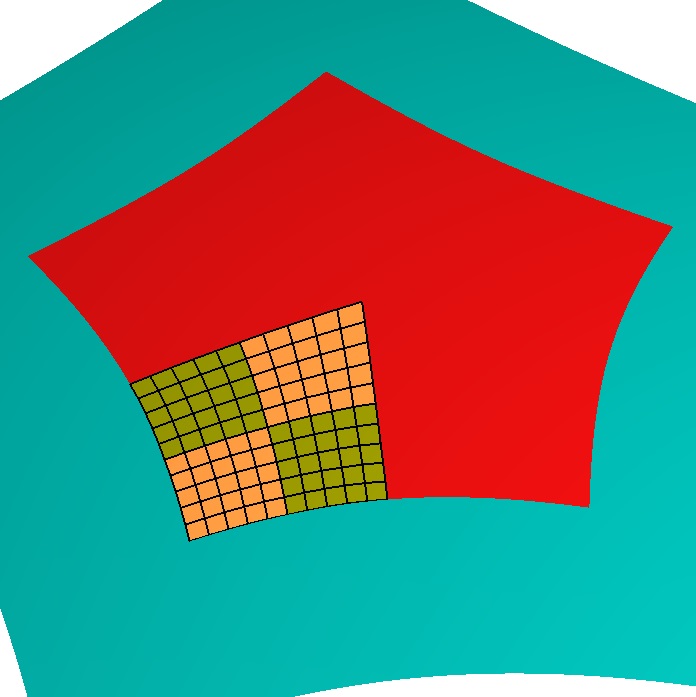}}
   \caption{
   $G^1$ caps for $n=5$. The cap is surrounded
   by the last ring produced by guided
   subdivision (blue).  
   }
   \label{fig:bi56caps}
\end{figure}

\noindent
\textbf{A Practical Hybrid.} 
Being able to cap the subdivision surface after a few steps
is useful in practice, already for visualization (see the next section).
For \CCa\ subdivision the first refinement steps introduce 
\hl\ distortions that no cap can repair.
By contrast, guided subdivision retains good shape and
enables capping without loss of shape quality.
\figref{fig:bi56caps} shows the natural structure of 
these bi-6 and bi-5 caps (bi-5 caps with one patch per sector
lead to reduced quality, hence the split). 
A prototypical construction, of a bi-6 cap,
is detailed in Appendix B.
The good shape of this finite hybrid representation makes
them aesthetically useful in their own right and
the finite number of polynomial surface pieces simplifies
their use for downstream applications and to serve
as a domain for surface-based computations.

\def\wid{0.2\linewidth}
\begin{figure}[h]
   \centering
   \subfigure[$n=5$]{
   \includegraphics[width=\wid]{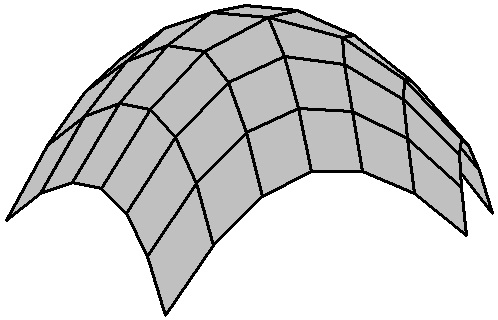}}
   \hskip0.2cm
   \subfigure[$n=5$]{
   \includegraphics[width=\wid]{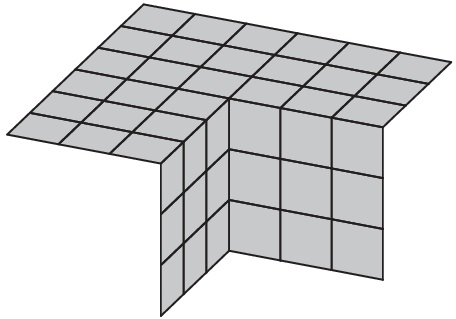}}
   \hskip0.2cm
   \subfigure[$n=6$]{
   \includegraphics[width=\wid]{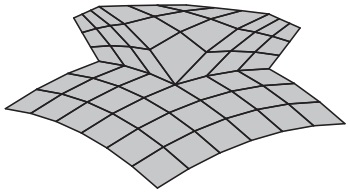}}
   \hskip0.2cm
   \subfigure[$n=7$]{
   \includegraphics[width=\wid]{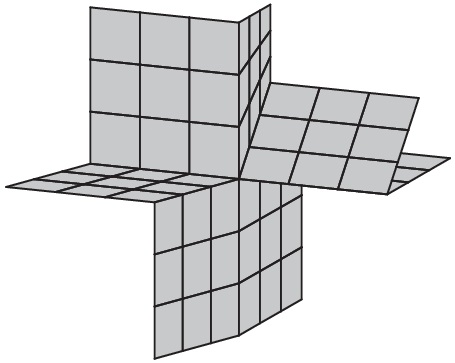}}\\
   \subfigure[$n=8$]{
   \includegraphics[width=\wid]{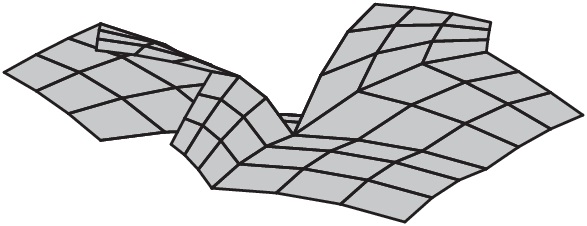}}
   \hskip0.2cm
   \subfigure[$n=9$]{
   \includegraphics[width=\wid]{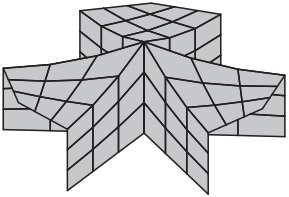}}
   \hskip0.2cm
   \subfigure[$n=3$]{
   \includegraphics[width=\wid]{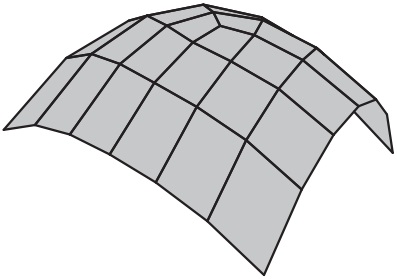}}
   \hskip0.2cm
   \subfigure[$n=6$]{
   \includegraphics[width=\wid]{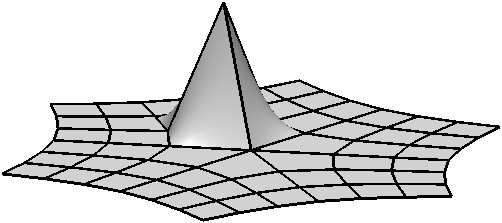}}
   \caption{
   Some challenging $\bc$-nets.
   }
   \label{fig:ccmeshes}
\end{figure}
\def\wid{0.3\linewidth}
\def\skp{\hskip 0.00\linewidth}
\begin{figure}[h]
   \centering
   \subfigure[$4$ bi-5 guided rings]{
   \includegraphics[width=\wid]{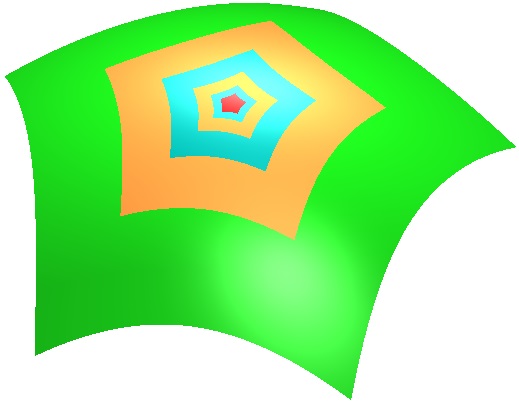}}
   \skp
   \subfigure[\hl s]{
   \includegraphics[width=\wid]{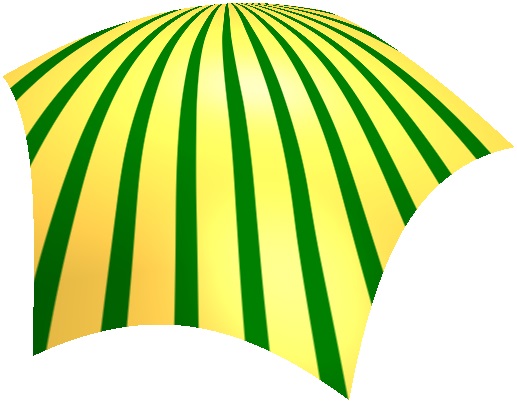}}
   \skp
   \subfigure[Gauss curvature]{
   \includegraphics[width=\wid]{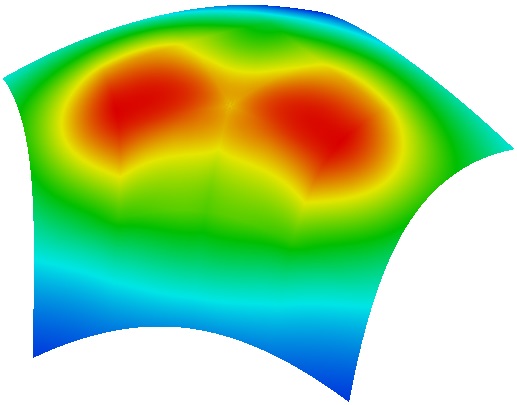}}\\
   \subfigure[$4$ bi-6 guided rings]{
   \includegraphics[width=\wid]{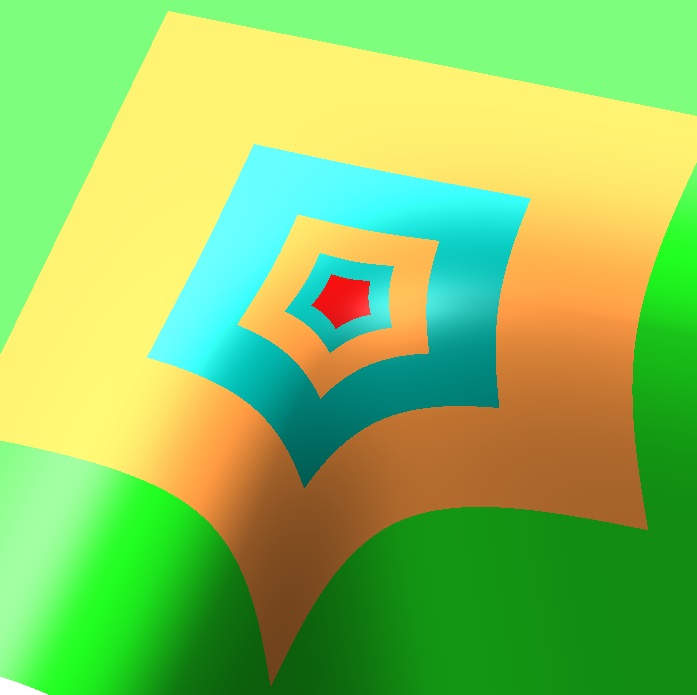}}
   \skp
   \subfigure[$6$ guided rings]{
   \includegraphics[width=\wid]{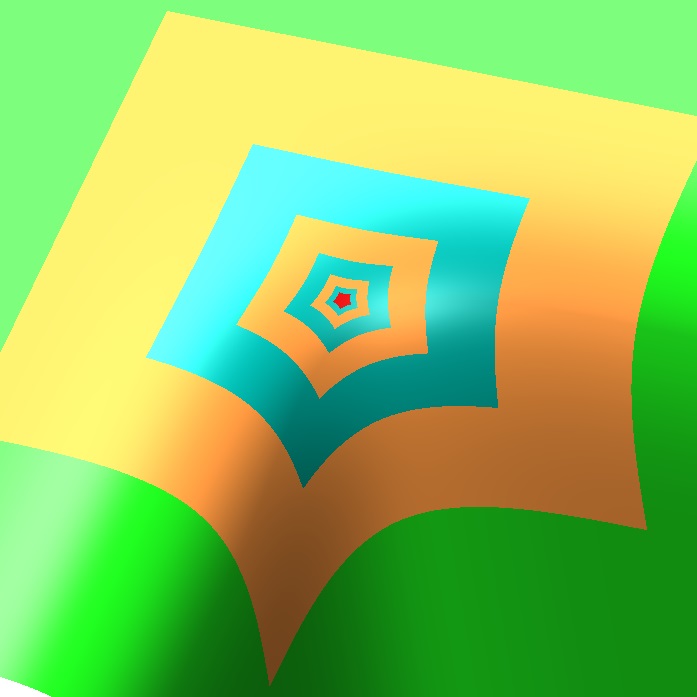}}
   \skp
   \subfigure[\hl s]{
   \includegraphics[width=\wid]{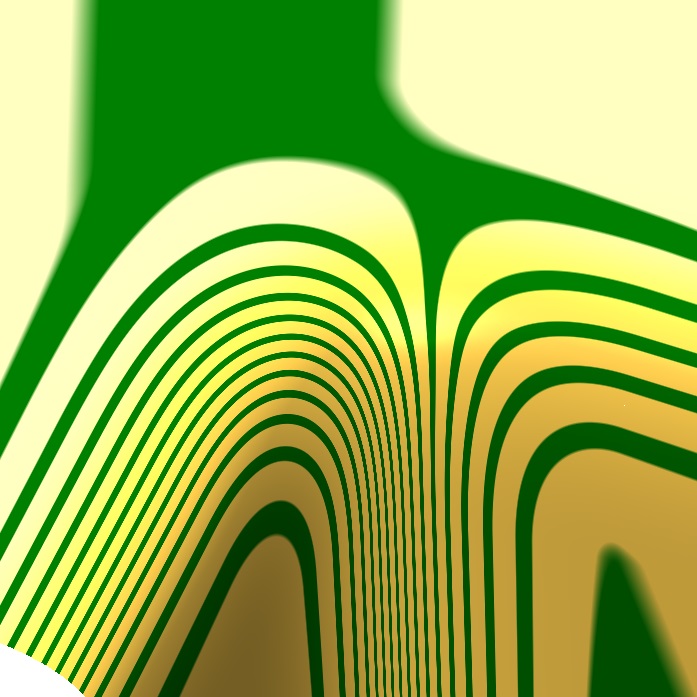}}
   \\
   \subfigure[zoom of (d): highlights and mean curvature]{
   \includegraphics[width=\wid]{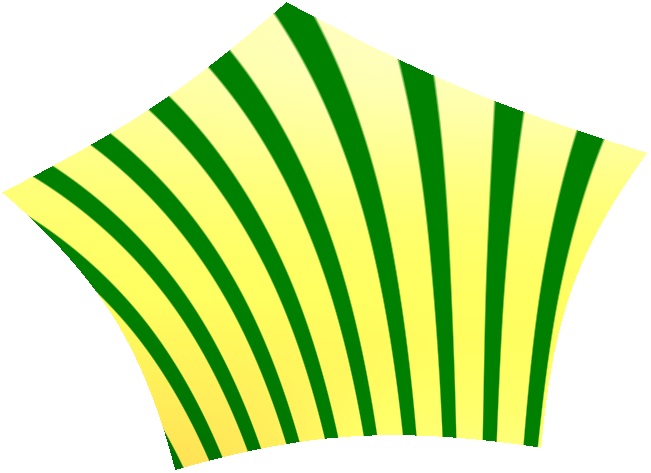}
   \skp
   \includegraphics[width=\wid]{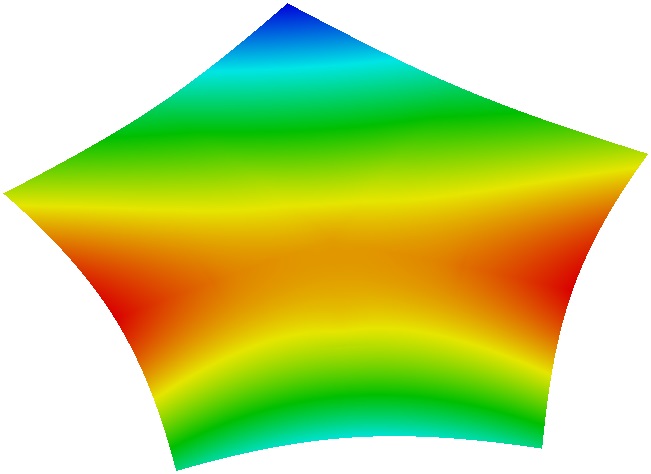}}
   \caption{
   Guided rings and central \textcolor{red}{cap} (in red) for 
   $n=5$: \emph{top} row: \cnet\ of \figref{fig:ccmeshes}a
   \emph{middle, bottom} rows: \cnet\ of \figref{fig:ccmeshes}b.
   }
   \label{fig:n5surf}
\end{figure}
\def\wid{0.22\linewidth}
\begin{figure}[H]
   \centering
   \subfigure[$4$ guided rings of degree bi-6]{
   \includegraphics[width=\wid]{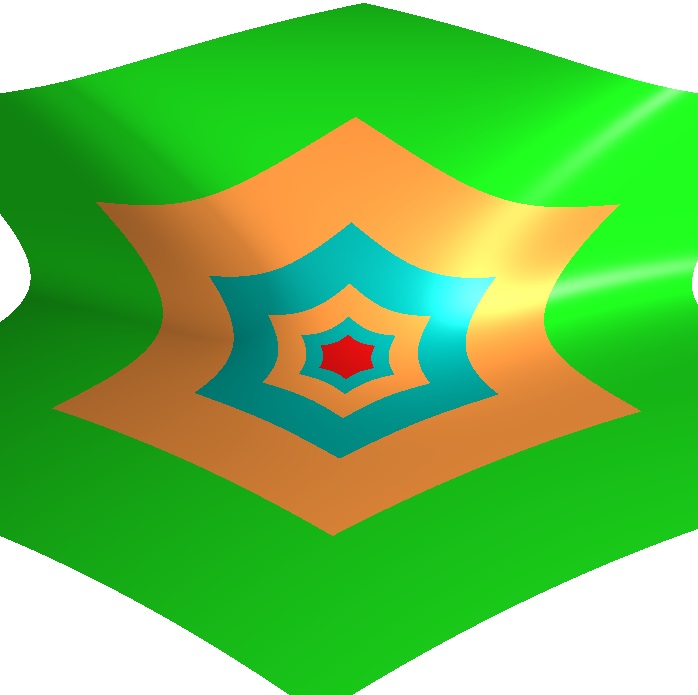}
   \skp
   \includegraphics[width=\wid]{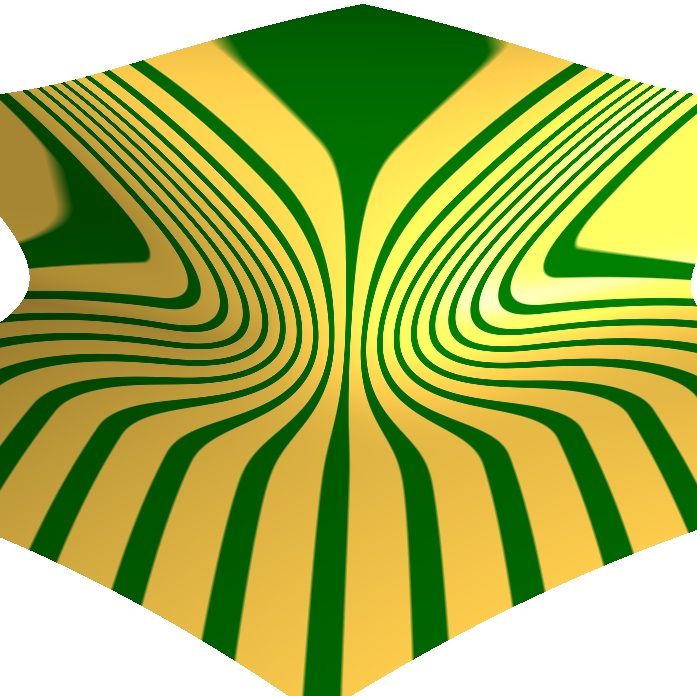}}
   \skp
   \subfigure[zoom; mean curvature]{
   \includegraphics[width=\wid]{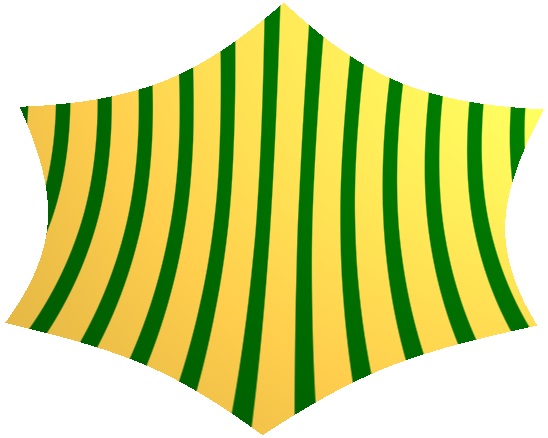}
   \skp
   \includegraphics[width=\wid]{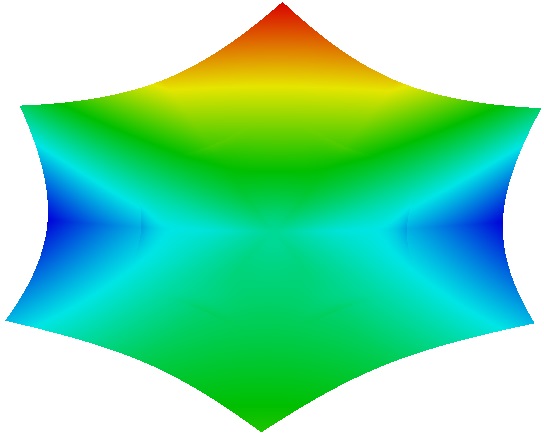}}\\
   \subfigure[$4$ guided rings of degree bi-5]{
   \includegraphics[width=\wid]{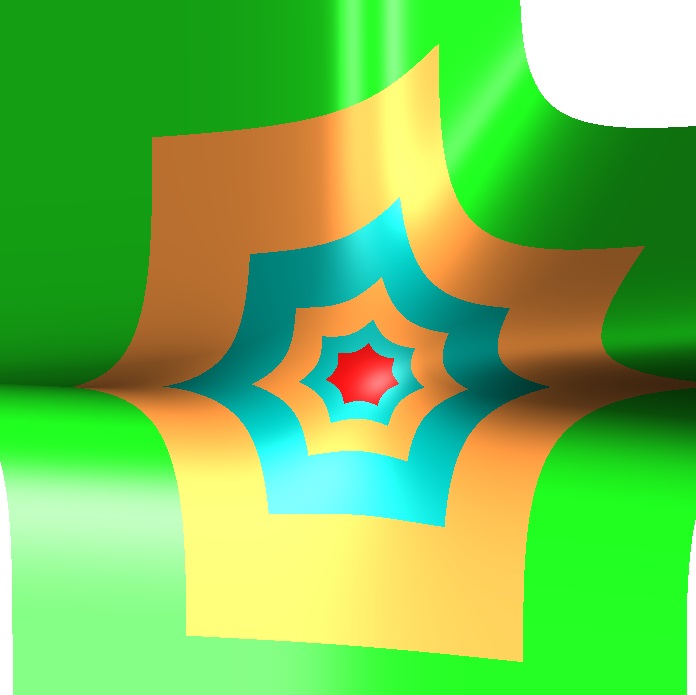}
   \skp
   \includegraphics[width=\wid]{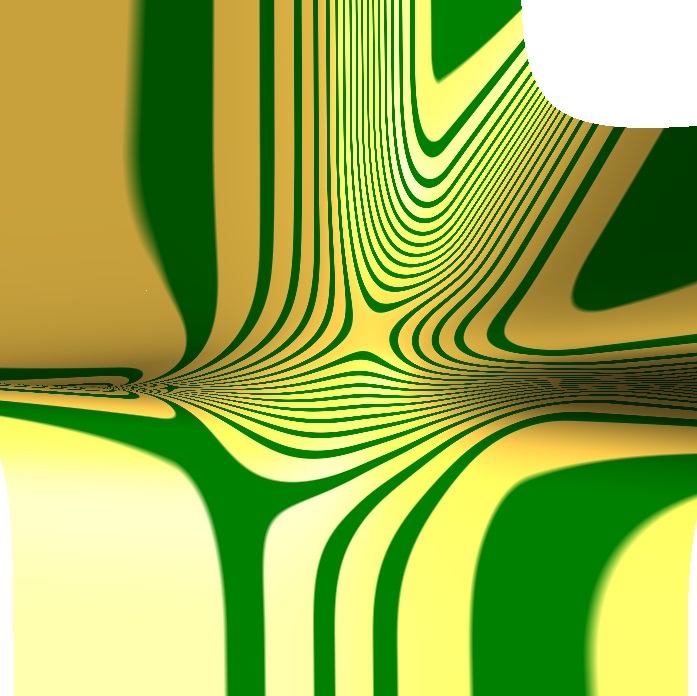}}
   \skp
   \subfigure[zoom; Gauss curvature]{
   \includegraphics[width=\wid]{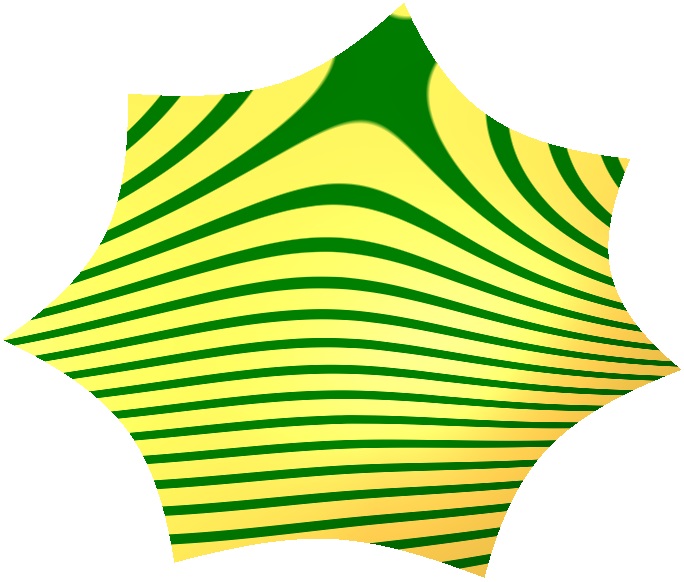}
   \hskip0.1cm
   \includegraphics[width=\wid]{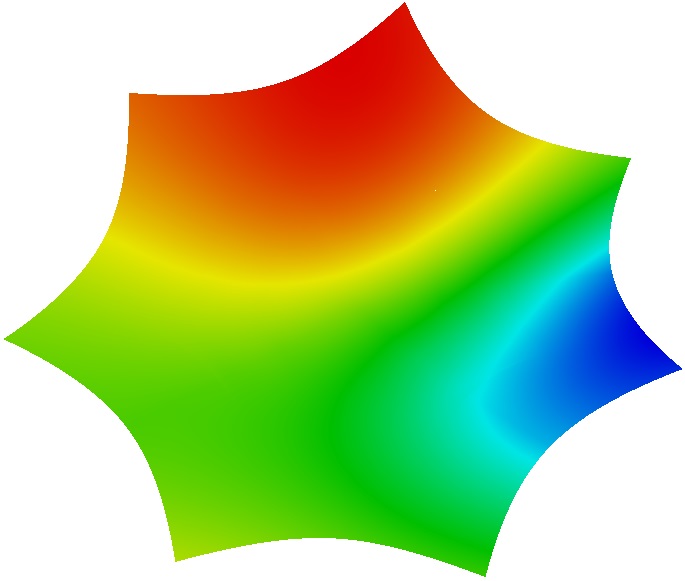}}
   \caption{Guided subdivision rings, \hl s and curvature.
   \IT\ row: bi-6 surfaces, $n=6$, \cnet\ \figref{fig:ccmeshes}c;
   \IB\ row: bi-5 surfaces, $n=7$, \cnet\ \figref{fig:ccmeshes}d.   
   }
   \label{fig:n67surf}
\end{figure}
\section{Examples}\label{sec:examp}
The Subdivision Algorithm is applied to the
challenging \cnet s presented in \figref{fig:ccmeshes}.
Displaying the surrounding spline surface in \textcolor{green}{green}
\chg{
in \figref{fig:n5surf} for valence $n=5$, 
\figref{fig:n67surf} for valences $n=6,7$, 
and 
\figref{fig:n893surf}
for valences $n=8,9$ and $n=3$, }
demonstrates that we do not just create good caps for the
various $n$, but caps that transition well from the input data.
The finite caps of the hybrid representation
are \textcolor{red}{red}. 'Zoom' indicates that innermost
guided ring and the finite cap are displayed. The zoom demonstrates
that even the caps have a calm curvature distribution,
quite in contrast to \CCa-subdivision, where the shape 
deficiencies are acerbated at each step.

\def\wid{0.22\linewidth}
\def\widd{0.25\linewidth}
\begin{figure}[H]
   \centering
   \subfigure[$4$ guided rings, $n=8$, bi-5]{
   \includegraphics[width=\wid]{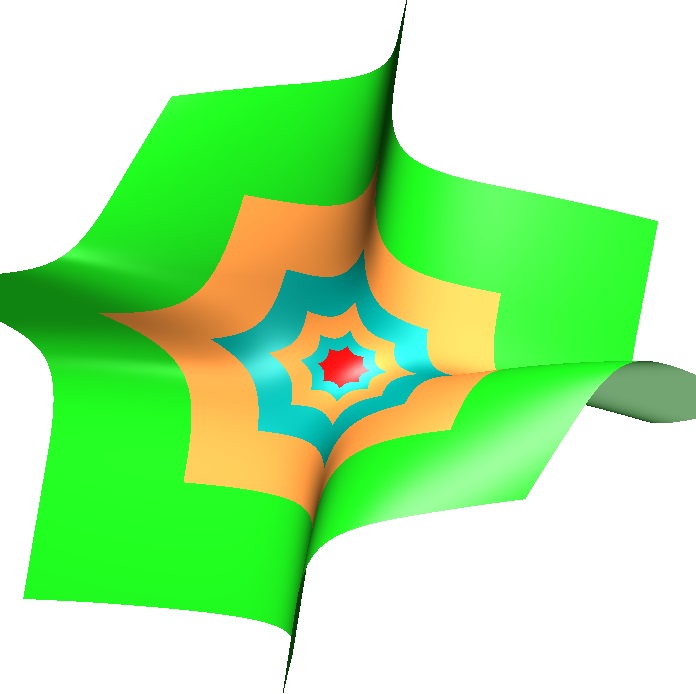}
   \skp
   \includegraphics[width=\wid]{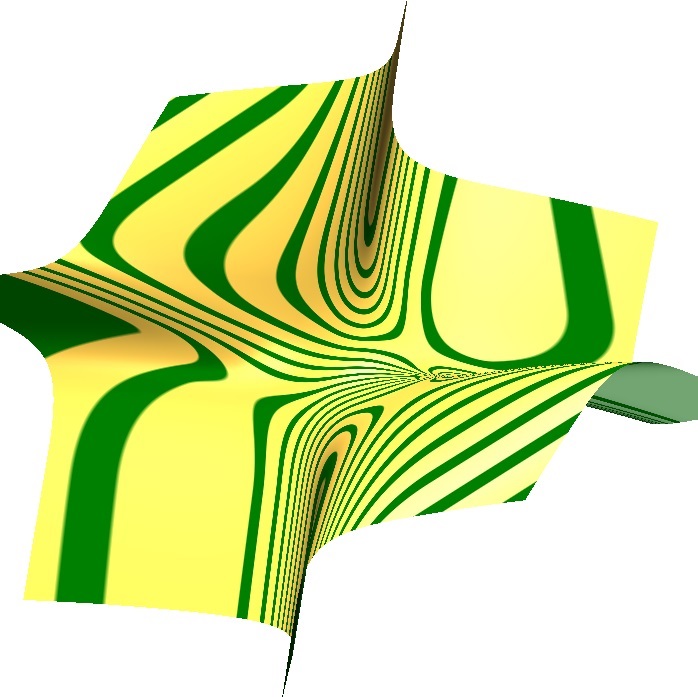}}
   \skp
   \skp
   \subfigure[$5$ guided rings, $n=9$, bi-5]{
   \includegraphics[width=\wid]{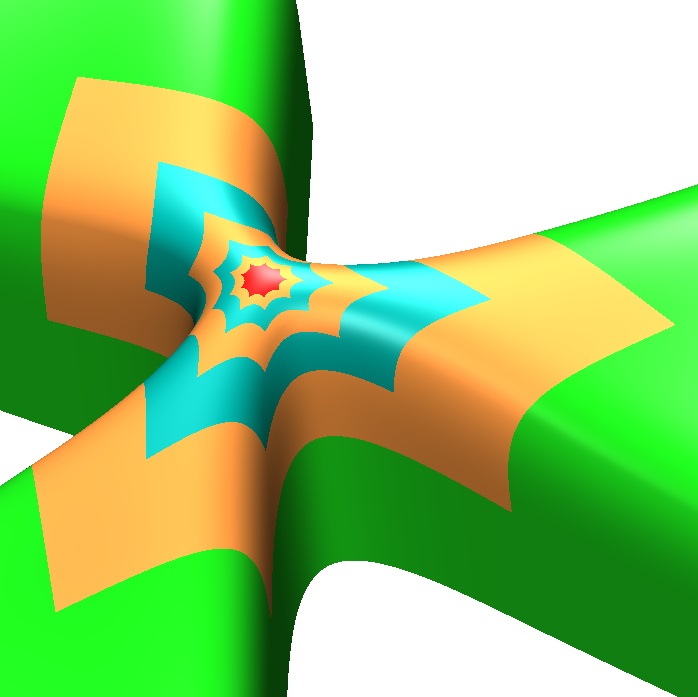}
   \skp
   \includegraphics[width=\wid]{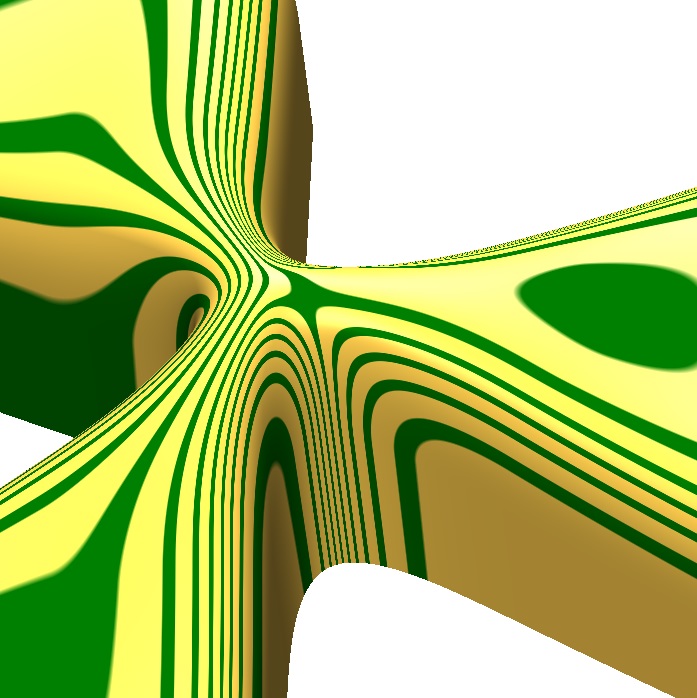}}\\
   \subfigure[$3$ guided rings, $n=3$, bi-5, Gauss curvature]{
   \includegraphics[width=\widd]{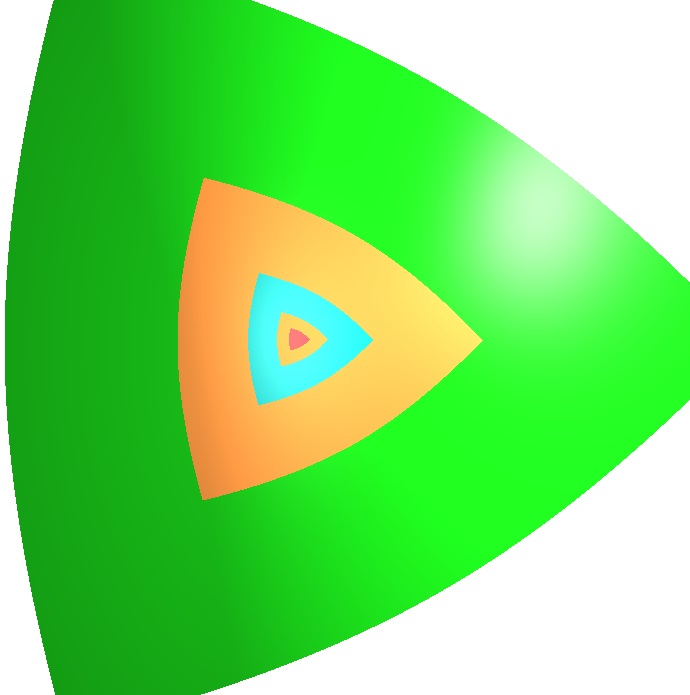}
   \hskip0.1cm
   \includegraphics[width=\widd]{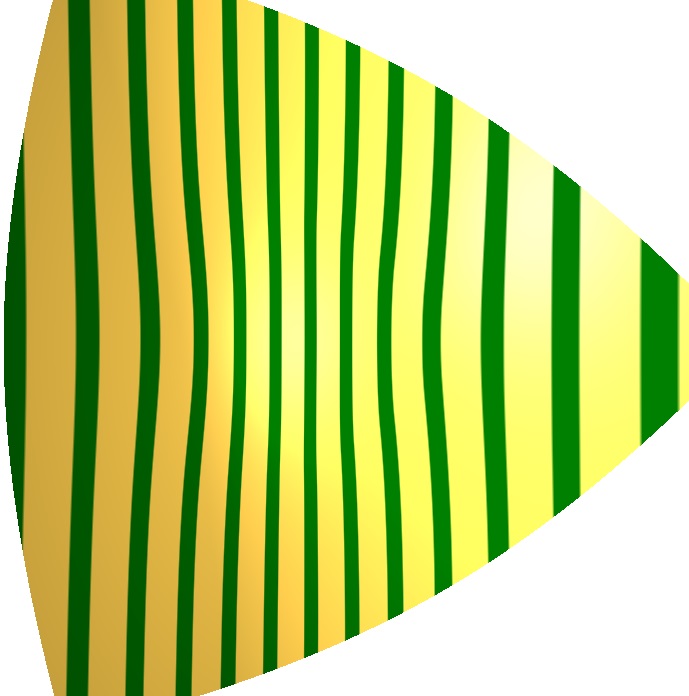}
   \hskip0.1cm
   \includegraphics[width=\widd]{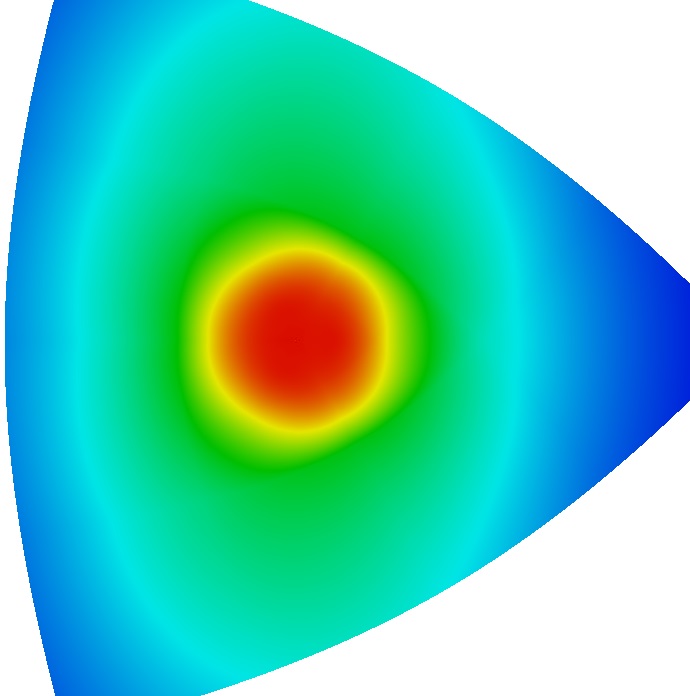}}
   \caption{Bi-5 guided subdivision surfaces for
   (a) $n=8$, \cnet\ \figref{fig:ccmeshes}e; 
   (b) $n=9$, \cnet\ \figref{fig:ccmeshes}f;
   (c) $n=3$, \cnet\ \figref{fig:ccmeshes}g.   
   }
   \label{fig:n893surf}
\end{figure}

\section{Eigen-decomposition}\label{sec:eigencomp}

%
The eigen-decomposition of guided subdivision differs from that of 
conventional subdivision such as \CCa, in that it is defined by the
guide surface: any guided subdivision
inherits the guide's eigen-decomposition.
Therefore the decomposition does not overtly involve  the specification of 
large circulant matrices or of characteristic polynomials;
and the same analysis applies to guided subdivision constructions
of different polynomial degree. 
If we consider just one bi-5 patch of $\ggg$ without neighbor-interaction,
the scaling by $\gl$ to the next-level subdivision ring
(\secref{sec:gdrings}) yields, due to the monomial structure,
eigenvalues $\gl^s$ for total degree $s=0,\ldots,10$  
and eigenfunctions $u^iv^j$ of tensor-degree $(i,j)$, $0\le i,j \le 5$ 
(see  \figref{fig:quadmono}).  
The eigen-analysis of a full ring is more complex due to
the interaction between neighbors, but retains this basic structure.

\begin{figure}[h]
   \centering
   \begin{overpic}[scale=.35,tics=10]{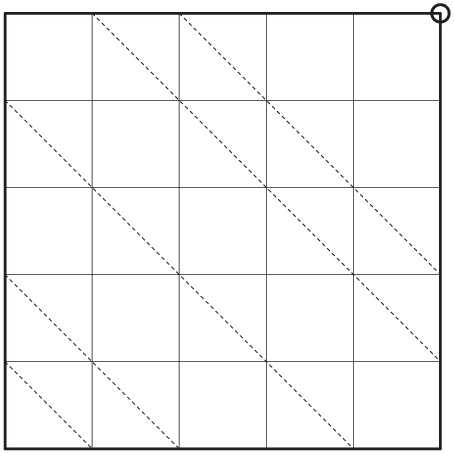}
        \put (100,92) {$1$}
	\put (98,73) {$v$}
	\put (98,54) {$v^2$}
	\put (98,39) {$s=3$}
	\put (98,20) {$s=4$}
	\put (-17,39) {$s=8$}
	\put (-17,20) {$s=9$}
	\put (72,93) {$u$}
	\put (54,93) {$u^2$}
	\put (69,74) {$uv$}
	\put (98,0) {$v^5$}
	\put (-7,93) {$u^5$}
	\put (-14,2) {$u^5v^5$}
	\put (50,39) {$u^2v^3$}
	\put (50,2) {$u^2v^5$}
	\put (10,39) {$u^4v^3$}
	\put (-14,57) {$u^5v^2$}
	\end{overpic}
    \caption{
   Monomial structure of the local eigen-decomposition.
   }
   \label{fig:quadmono}
\end{figure}
The Subdivision Algorithm constructs the $\ell$th surface ring 
from a vector of BB-coefficients $P_\ell$ corresponding to the
set $\gfree$ of the guide $\ggg$ restricted to $[0..\gl^\ell]^2$.
With $M$ the map so that $P_\ell = M P_{\ell-1}$,
the eigen-decomposition determines $\eigv$ and
$\mu$ so that $M \eigv = \mu \eigv$. Appendix C shows that all eigenvalues $\mu$ are of the
form $\gl^s$ where $0 < \gl < 1$  is free to choose:
\begin{align}
\begin{matrix}
   \gl^s, s= & 0 & 1 & 2 & 3 & 4 & 5,6,7,8 & 9 & 10 \\
   \text{multiplicity} & 1 & 2 & 3 & n^* & 2n &  3n& 2n & n 
\end{matrix}
\label{eq:eigvals}
\end{align}
where $n^* =n+1$ when $n \in \{3,6\}$ and $n^*=n$ otherwise.
\def\wid{0.22\linewidth}
\begin{figure}[h]
   \centering
   \subfigure[$\gl$]{
   \includegraphics[width=\wid]{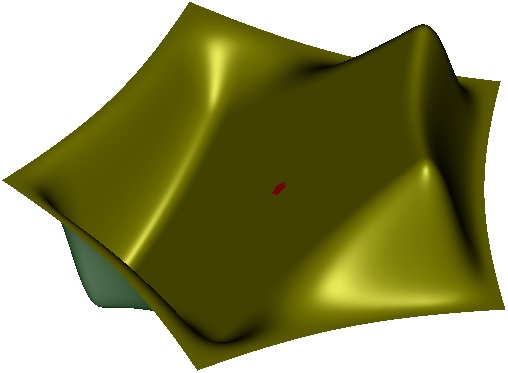}}
   \hskip0.1cm
   \subfigure[$\gl^2$]{
   \includegraphics[width=\wid]{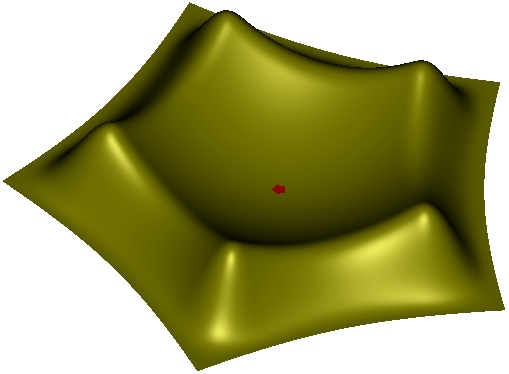}}
   \hskip0.1cm
   \subfigure[$\gl^2$]{
   \includegraphics[width=\wid]{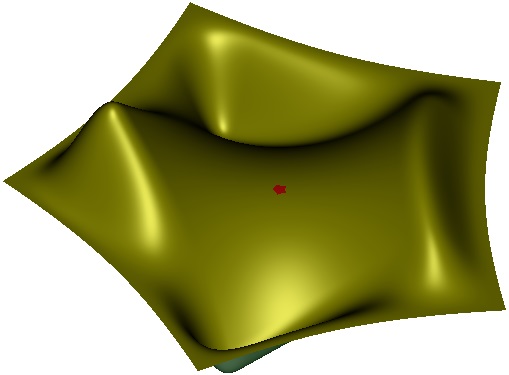}}
   \hskip0.1cm
   \subfigure[$\gl^2$]{
   \includegraphics[width=\wid]{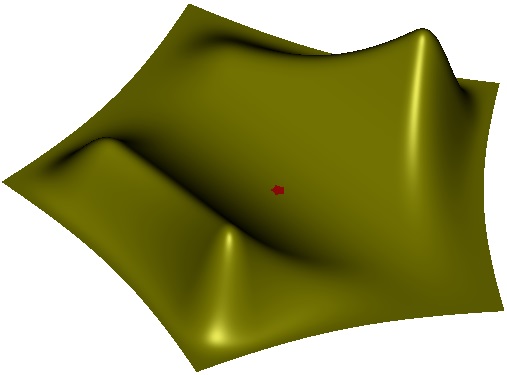}}\\
   \subfigure[$\gl^3$]{
   \includegraphics[width=\wid]{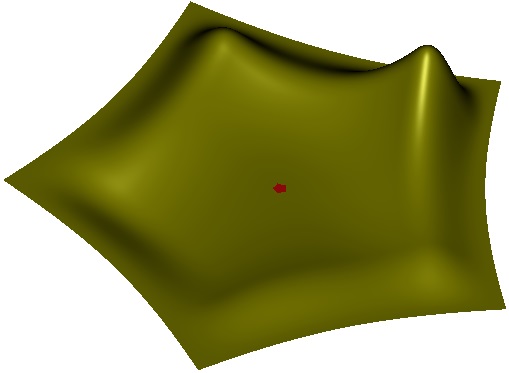}}
   \hskip0.1cm
   \subfigure[$\gl^4$]{
   \includegraphics[width=\wid]{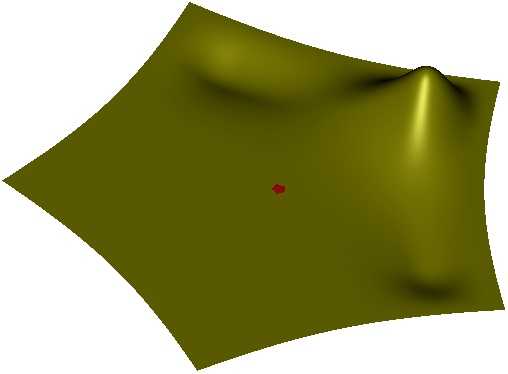}}
   \hskip0.1cm
   \subfigure[$\gl^4$]{
   \includegraphics[width=\wid]{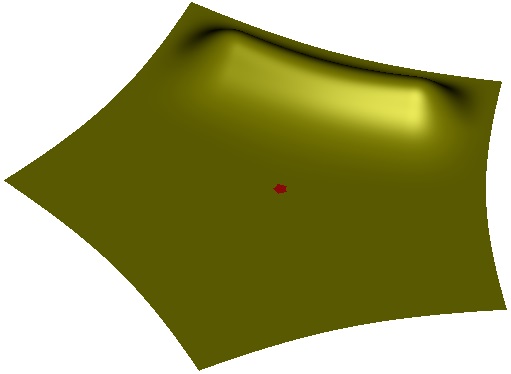}}
   \hskip0.1cm
   \subfigure[$\gl^5$]{
   \includegraphics[width=\wid]{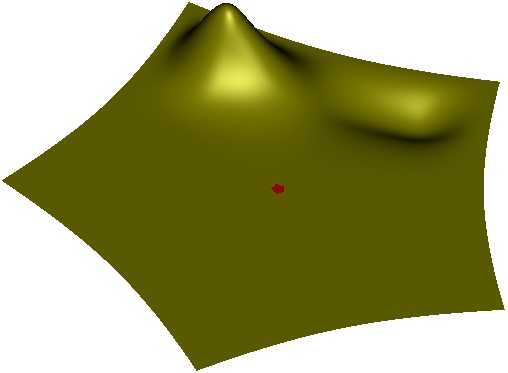}}\\
   \subfigure[$\gl^6$]{
   \includegraphics[width=\wid]{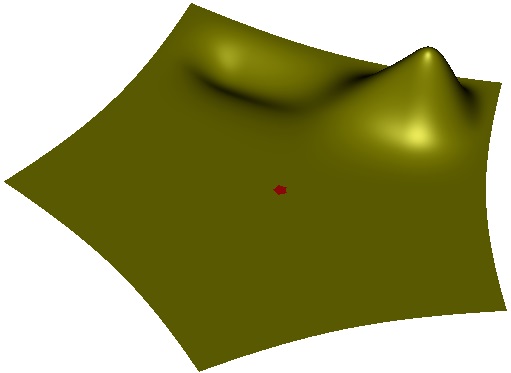}}
   \hskip0.1cm
   \subfigure[$\gl^7$]{
   \includegraphics[width=\wid]{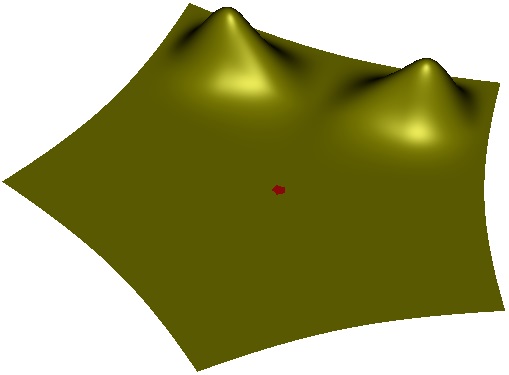}}
   \hskip0.1cm
   \subfigure[$\gl^8$]{
   \includegraphics[width=\wid]{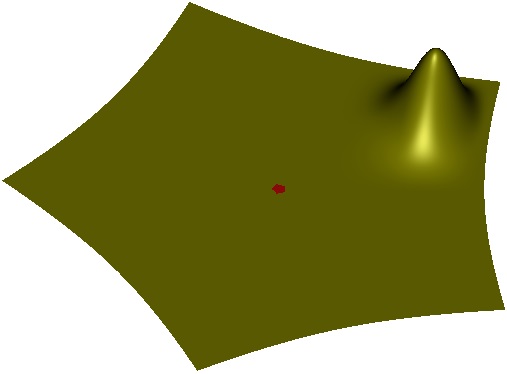}}
   \hskip0.1cm
   \subfigure[$\gl^0 =1$]{
   \includegraphics[width=\wid]{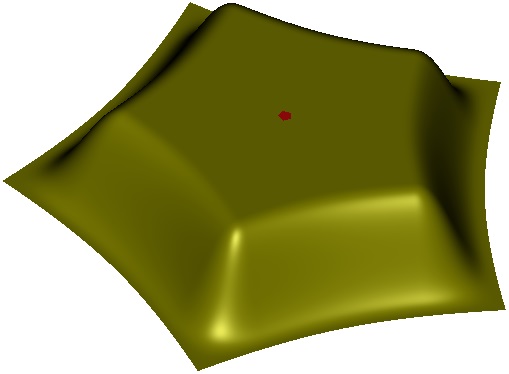}}
   \caption{
   Some bi-5 eigenfunctions for $n=5$.   
   }
   \label{fig:eig5func}
\end{figure}
The bi-5 eigenfunctions for $n=5$ shown in \figref{fig:eig5func} 
are determined by setting one free parameter $x^s_k$
(see \figref{fig:eigenstr}) to $1$
and all others to $0$, completing this determining set to 
a smooth eigenring according to the Subdivision Algorithm,
and generating six scaled copies before filling the center with
a (red) cap. 
For $\gl^2$
the functions in \figref{fig:eig5func} are scaled in height and,
at each level, we formed, 
linear combinations of the eigenrings to show the typical
cup and two saddle shapes of the quadratic terms.
After assembling the surface, the outermost BB-coefficients 
were set to zero.

Due to the index-rotational symmetry, for $s=3,\ldots10$
only $23$ $(=17+1+5)$ eigenfunctions
(besides the constant one for the eigenvalue 1)
need to be tabulated. An exception to this symmetry is 
$s=3$ when $n =3,6$ and $26,29$ functions are required.
Since the eigen-ring $\ell$ is obtained by multiplying the initial
eigen-ring by $(\gl^s)^\ell$, it suffices to store the initial eigen-ring.
The $\ell$th surface ring 
is a linear combination of these $(\gl^s)^\ell$-scaled eigen-rings.

\section{Discussion}\label{sec:disc}
Here we discuss alternatives and options that were 
not given prominence in the main construction:
reducing the degree of the guide (to simplify the
eigenstructure), reducing the degree of the overall
guided subdivision scheme, changing the speed of the 
contraction of the subdivision rings and refining functions on
a \gss.

\subsection{Guides of lower degree}\label{sec:lowguide}

The degree of the subdivision surfaces is not strongly coupled to the 
degree of the guide surface. We can choose guides of degree bi-4 or bi-3
where the enforcement of \eqref{g1const} and \eqref{g2const}
and the eigendecomposition are analogous to bi-5 case.
Corresponding to the unconstrained control points for the guides $\ggg$
(cf.\ \figref{fig:bi4bi3gd}),
we count 
\begin{align*}
\begin{tabular}{l l l}
degree of $\ggg$  & eigenfunctions \\
bi-5  & $17n+6+n^*$  \\
bi-4  & $9n+6+n^*$ \\
bi-3  & $3n+6+n^*$ 
\end{tabular}
\end{align*}

\def\wid{0.3\linewidth}
\def\widd{0.33\linewidth}
\begin{figure}[h]
   \centering
   \subfigure[bi4]{
   \includegraphics[width=\wid]{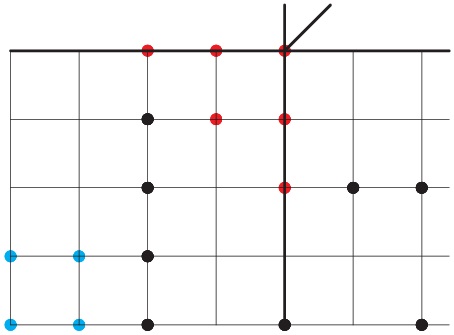}}
   \hskip1.0cm
   \subfigure[bi-3]{
   \includegraphics[width=\widd]{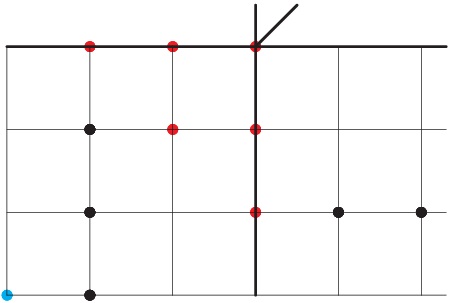}}
   \caption{
   The control points of $G^2$ guides of degree bi-4 and bi-3.
   Unconstrained points are marked as \textcolor{red}{red} and black disks.   
   }
   \label{fig:bi4bi3gd}
\end{figure}
\def\wid{0.3\linewidth}
\def\widd{0.22\linewidth}
\begin{figure}[h]
   \centering
   \subfigure[4 rings (guide bi-3)]{
   \includegraphics[width=\wid]{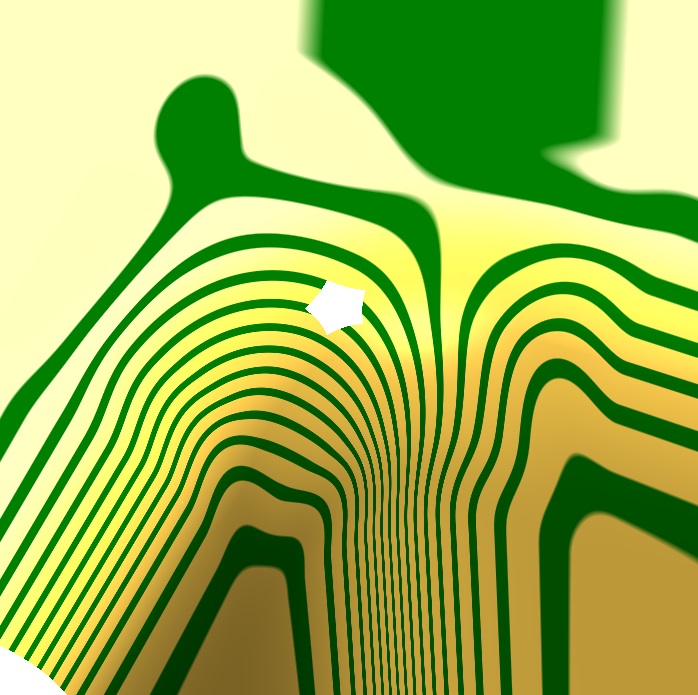}}
   \skp
   \subfigure[(bi-5) + 3 (bi-3)]{
   \includegraphics[width=\wid]{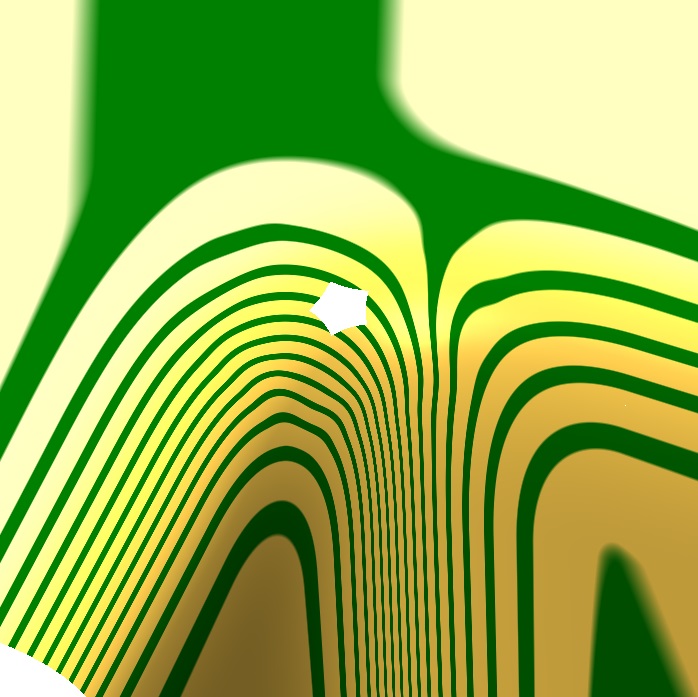}}
   \skp
   \subfigure[2 (bi-5) + 2 (bi-3)]{
   \includegraphics[width=\wid]{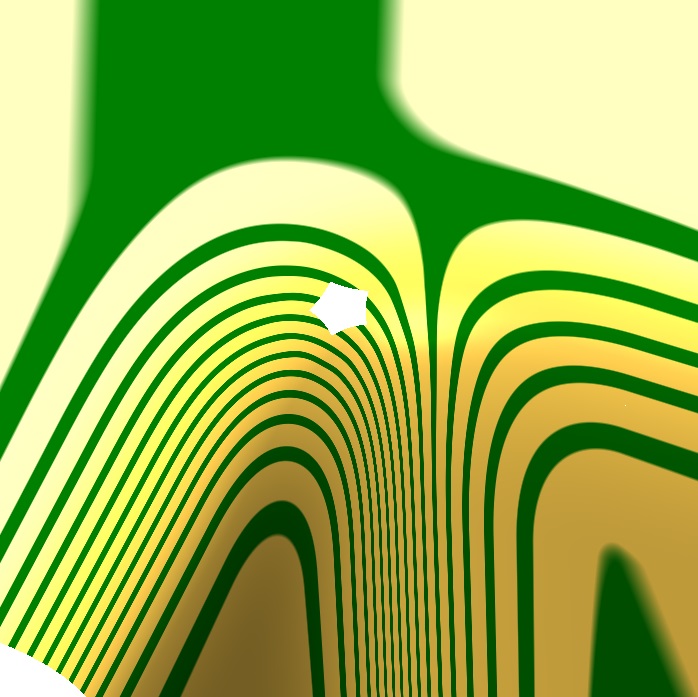}}
   \\
   \subfigure[rings]{
   \includegraphics[width=\widd]{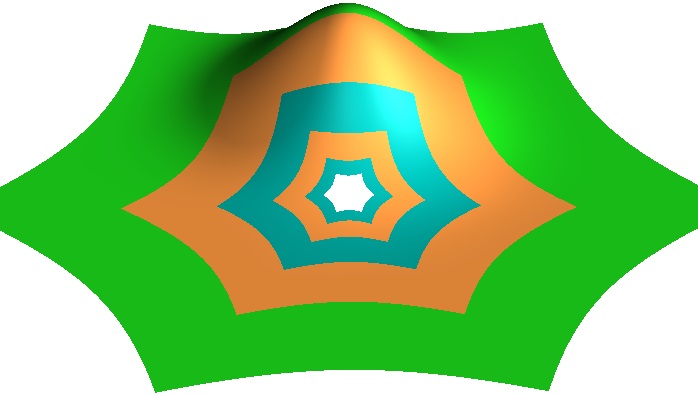}}
   \skp
   \subfigure[4 (bi-5)]{
   \includegraphics[width=\widd]{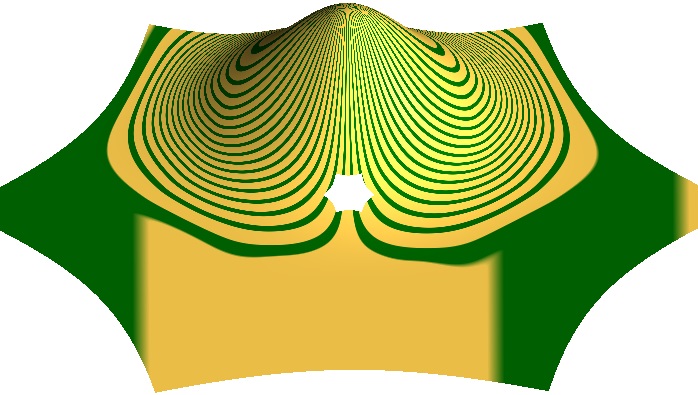}}
   \skp
   \subfigure[4 (bi-4)]{
   \includegraphics[width=\widd]{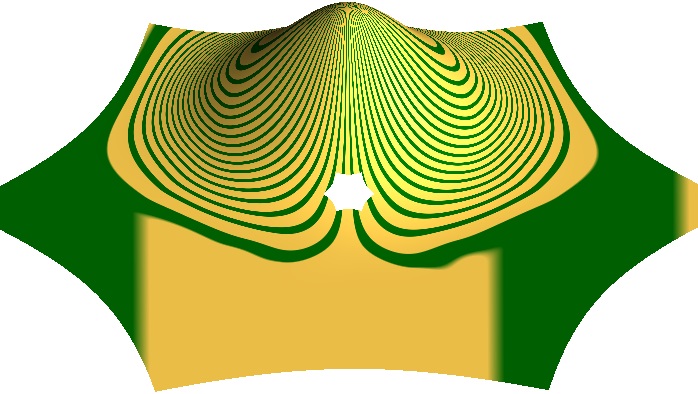}}
   \skp
   \subfigure[2(bi-5) +\quad 2 (bi-3)]{
   \includegraphics[width=\widd]{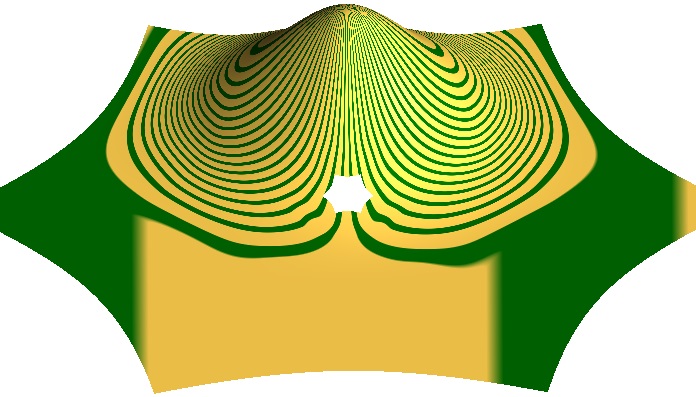}}
   \caption{
   {Comparison of bi-5 surfaces with 4 guided rings sampled from 
   guides of different degrees}. 
   The central gap is not filled with a cap.
   The \hld\ of guided subdivision surfaces from
   \IT\ row: net \figref{fig:ccmeshes}b, 
   \IB\ row: net \figref{fig:ccmeshes}h.   
   }
   \label{fig:bi543comp}
\end{figure}
On one hand, lower degree simplifies the analysis while on the 
other hand, lower degree of an $n$-patch guide
harms the surface quality as illustrated in
\figref{fig:bi543comp} (no caps are added since the poorer
\hld\ shows clearly in the subdivision rings).
\figref{fig:bi543comp}a shows that even an optimized bi-3 guide
yields oscillating and sharp \hl s.

We tested two approaches to improving the shape.
Applying one step of bi-5 subdivision followed by a bi-3 guide improves 
results, see \figref{fig:bi543comp}b; and twice repeating the 
bi-5 subdivision improves the \hld\ to \figref{fig:bi543comp}c. 
The \cnet\ of \figref{fig:ccmeshes}h, a single off-center spike, 
is notorious for revealing shortcomings of surface constructions.
Yet, the results (see \figref{fig:bi543comp} bottom row)
of applying a bi-4 guide (after zero or one step of bi-5 subdivision)
are difficult to distinguish from those of the bi-5 guide.
By contrast, (full enlargement of) \figref{fig:bi543comp}g
shows  oscillations of the \hl s for a bi-3 guide 
even after two bi-5 preprocessing steps.

\subsection{Bi-4 and bi-3 guided subdivision surfaces}\label{sec:bi4bi3sur}
For some applications, degree bi-5 may be considered a drawback. 
Leveraging the weak link between shape and smoothness,
we introduce $2\times 2$ macro patches for degree bi-4 and
$3\times 3$ macro patches for degree bi-3 (see \figref{fig:bi4bi3sch})
to enforce the $C^2$ prolongation between rings.
The resulting surfaces are curvature bounded at the central point
and preserve the \hld\ of the bi-5 construction well -- at the cost of 
increasing the number of polynomial pieces by a factor of 4, respectively 9.

Notably, for the corresponding hybrid construction,
the $G^1$ bi-3 caps are formally only $C^0$-connected
to the last guided ring. Yet, the resulting \hld\ is without flaw
(rightmost zoomed image of \figref{fig:bi34caps}b);
by contrast, enforcing 
exact $C^1$-continuity reduces the uniformity of the \hld.

\def\wid{0.3\linewidth}
\def\widd{0.15\linewidth}
\begin{figure}[h]
   \centering
   \subfigure[bi-4 ring and cap]{
   \includegraphics[width=\wid]{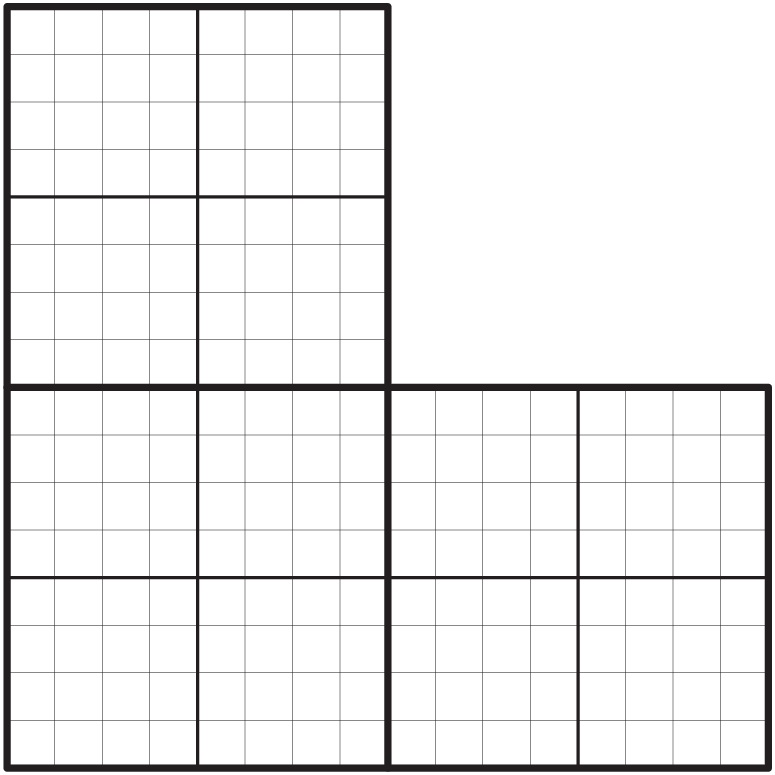}
   \begin{minipage}[b]{\widd}
   \includegraphics[width=\linewidth]{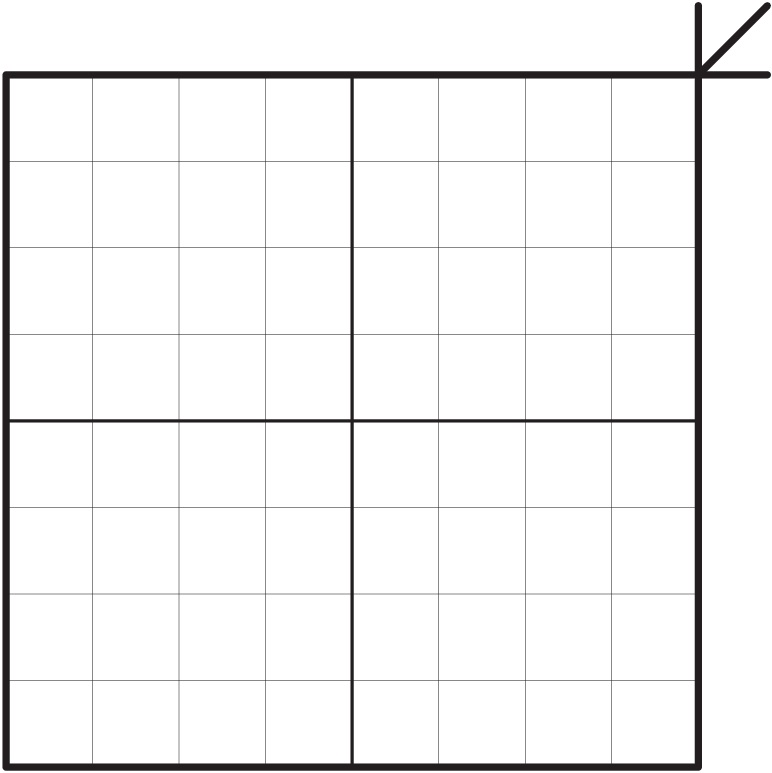}
   \vskip1cm
   \phantom{}
   \end{minipage}
   }
   \subfigure[bi-3 ring and cap]{
   \includegraphics[width=\wid]{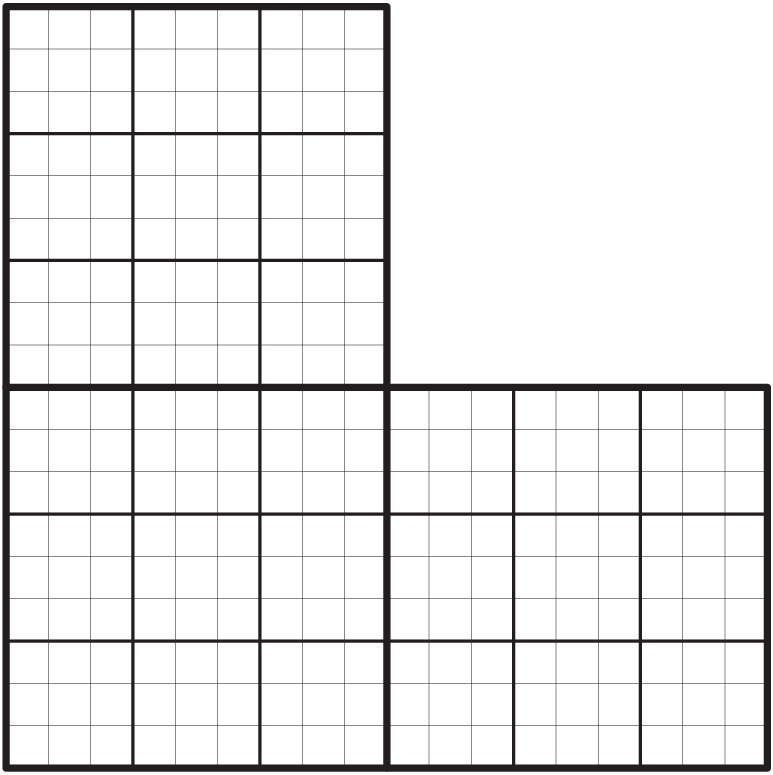}
   \begin{minipage}[b]{\widd}
   \includegraphics[width=\linewidth]{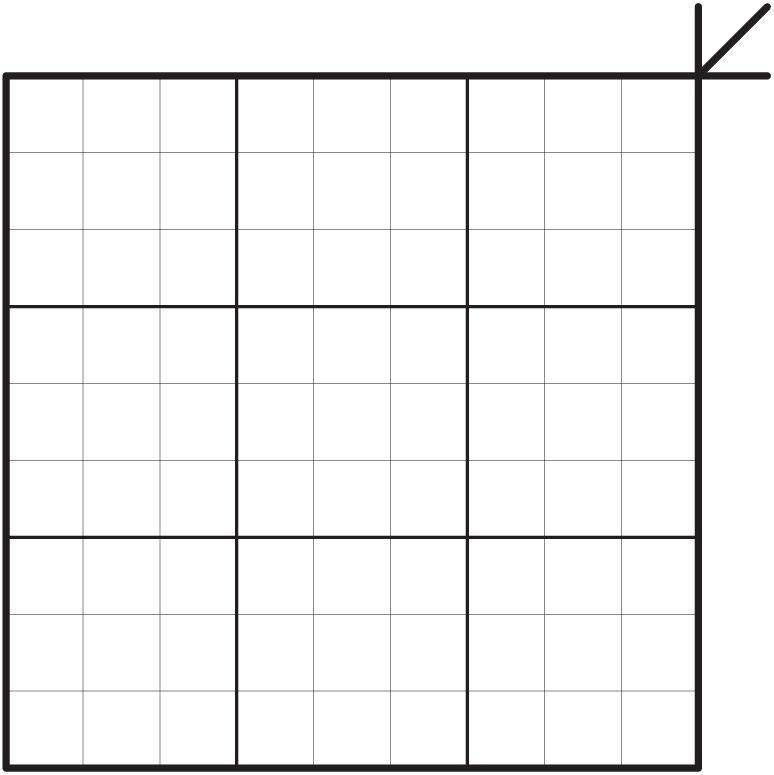}
   \vskip1cm
   \phantom{.}
   \end{minipage}
   }
   \caption{Macro-patches: one sector of the 
   last guided subdivision ring and a cap, consisting of $2\times 2$ 
   respectively $3\times 3$ pieces.
   }
   \label{fig:bi4bi3sch}
\end{figure}
\def\wid{0.2\linewidth}
\def\widd{0.1\linewidth}
\begin{figure}[h]
   \centering
   \subfigure[bi-3: \cnet, 4 guided rings, \hl s and Gauss curvature]{
   \includegraphics[width=\widd]{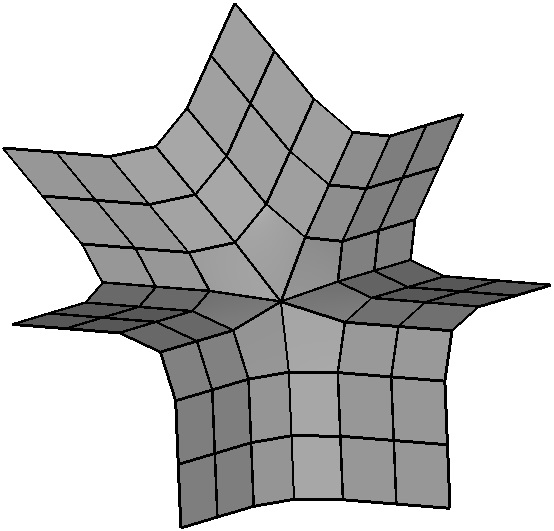}
   \includegraphics[width=\wid]{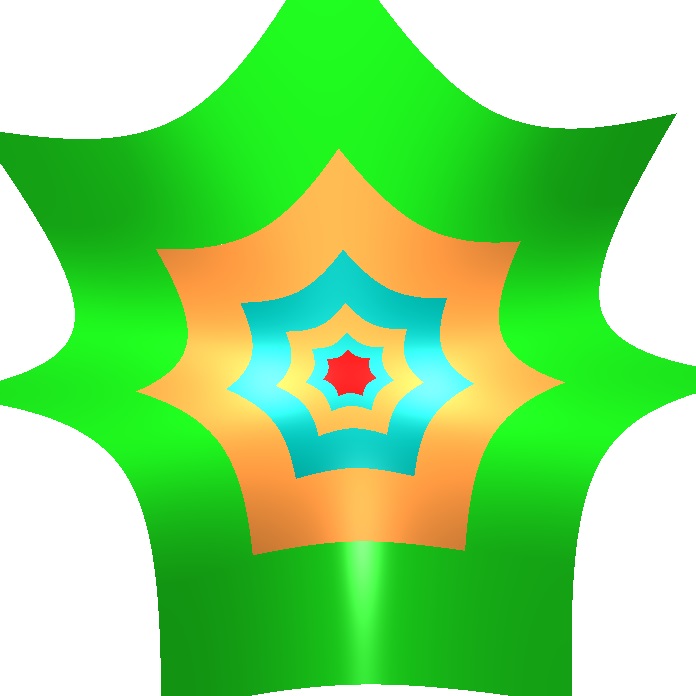}
   \includegraphics[width=\wid]{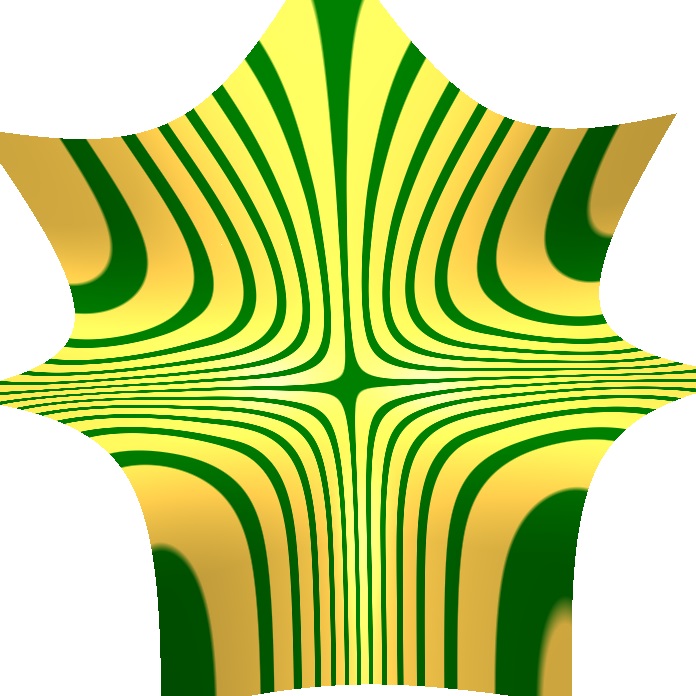}
   \includegraphics[width=\wid]{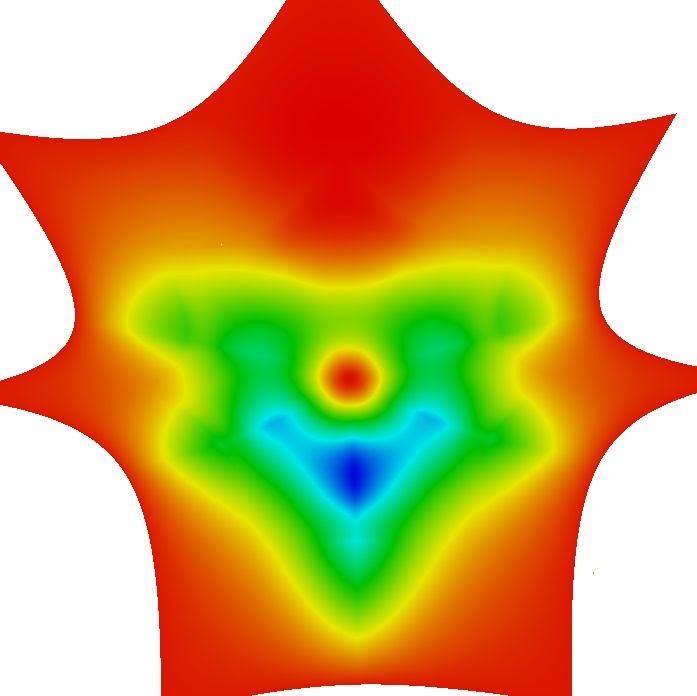}
   \includegraphics[width=\wid]{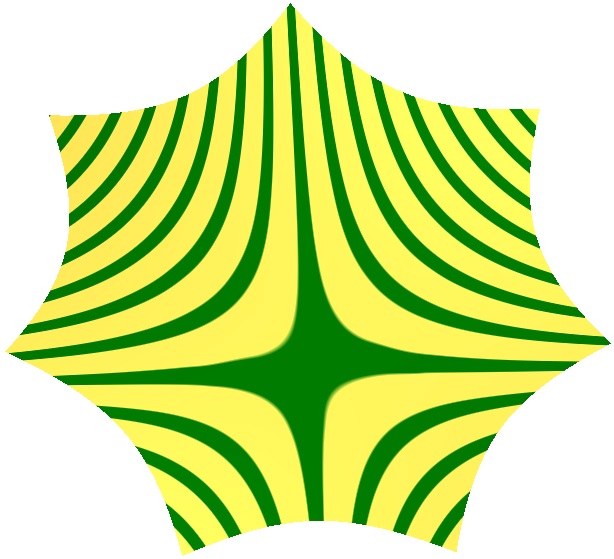}
   }
   \\
   \subfigure[bi-4: \cnet, 4 guided rings, \hl s and Gauss curvature]{
   \includegraphics[width=\widd]{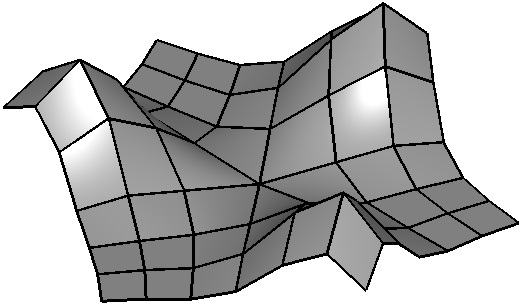}
   \includegraphics[width=\wid]{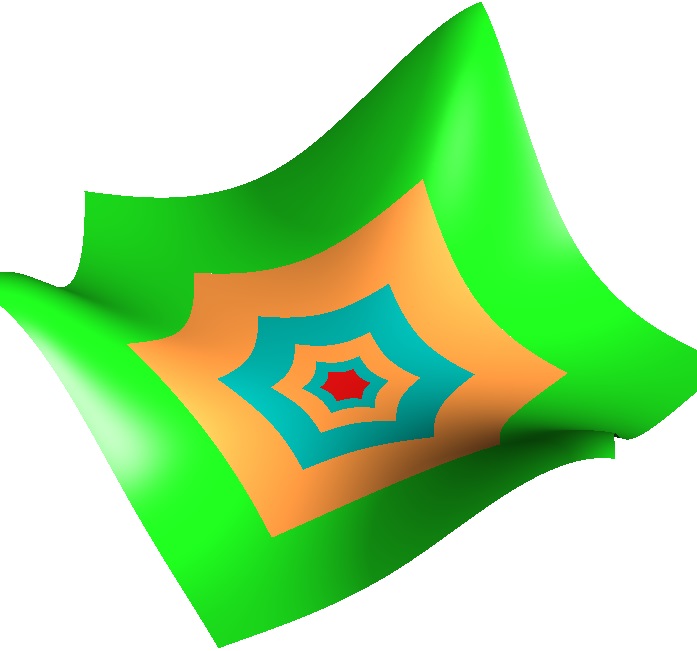}
   \includegraphics[width=\wid]{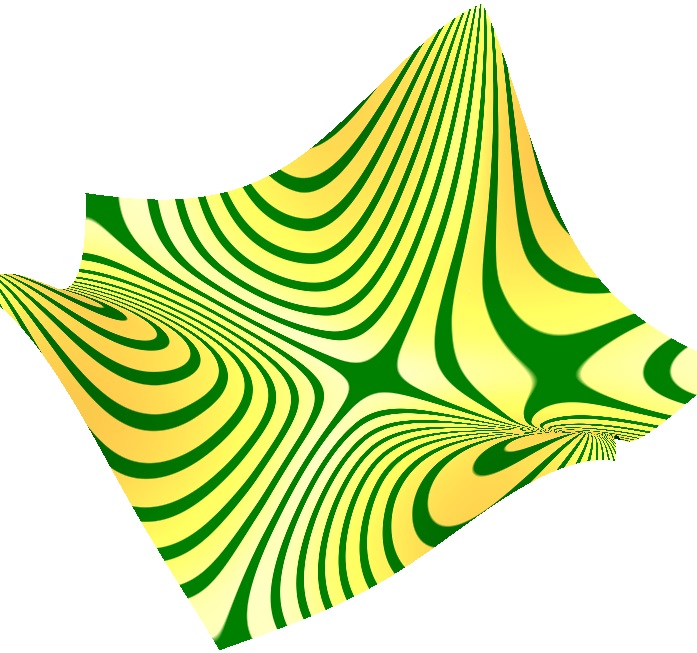}
   \includegraphics[width=\wid]{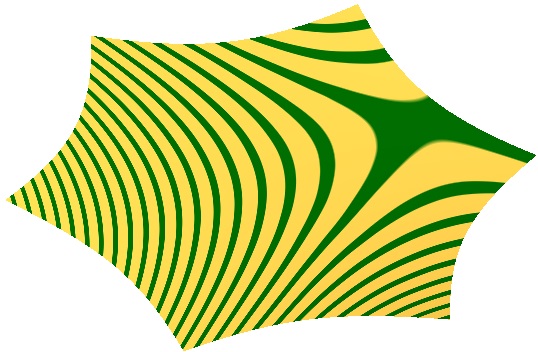}
   \includegraphics[width=\wid]{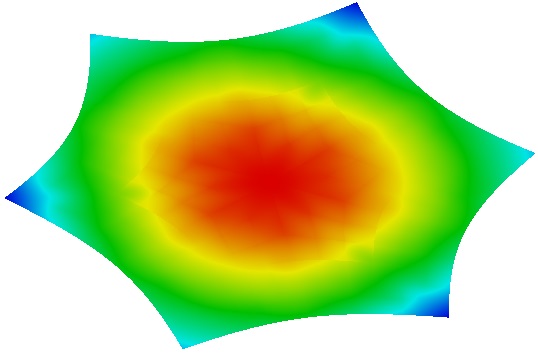}
   }
   \caption{Macro-patch constructions with (red) cap. 
   Rightmost figures are zoomed in to the last ring plus cap.
   }
   \label{fig:bi34caps}
\end{figure}


\subsection{Changing the contraction speed}
\label{sec:speedy}
Using the subdominant eigenvalue $\gl$ of \CCa\ subdivision
for $\gs$ implies that the contraction of guided rings becomes slower
when the valence increases ( \figref{fig:n893surf}b vs c).
Using instead the characteristic map of \cite{Karciauskas:2008:ASS},
the eigenvalue can be set to $\frac{1}{2}$ for all valencies $n>4$
to yield a uniform contraction speed.
\figref{fig:spchar} \IT\ vs \IB\ contrasts the characteristic map
\chg{for \CCa\ subdivision and $\gl$, with the uniform contraction
by $\frac{1}{2}$ of \cite{Karciauskas:2008:ASS}.}
%
\figref{fig:spn89}a shows the same surface as \figref{fig:n893surf}a
while \figref{fig:spn89}b is constructed 
with contraction speed $1/2$. The latter has visually identical
\hl s, an observation that holds for all \cnet s that we tested,
including all of \figref{fig:ccmeshes}
(The increased contraction is evident from the size of the 
\textcolor{red}{red} caps.)
\figref{fig:n893surf}b compares to 
\figref{fig:spn89}d.
All \cnet s of \figref{fig:ccmeshes} have a 
more uniform curvature distribution
in the vicinity of the caps when using speed $1/2$.
\def\wid{0.75\linewidth}
\begin{figure}[h]
   \centering
   \begin{overpic}[scale=.35,tics=10]{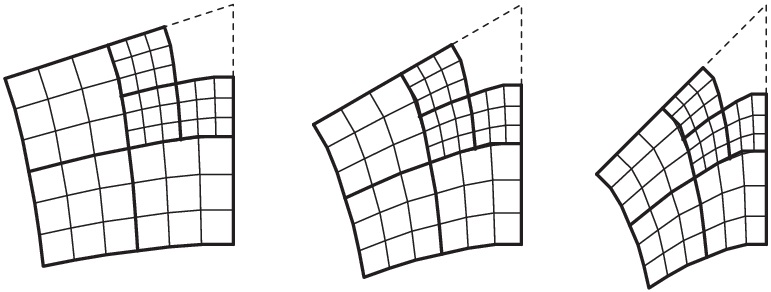}
        \put (15,1) {$n=5$}
	\put (55,1) {$n=6$}
	\put (90,1) {$n=8$}
	\end{overpic}\\
   \begin{overpic}[scale=.35,tics=10]{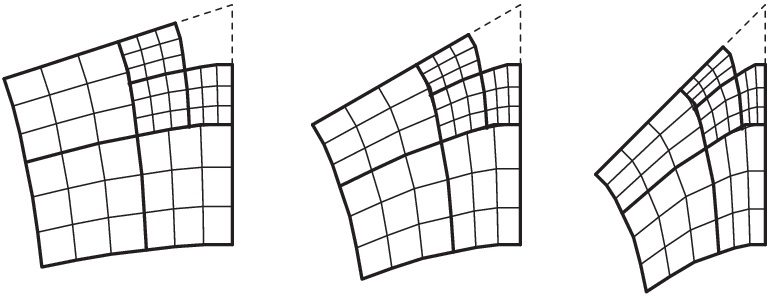}
        \end{overpic}
   \caption{
   {Characteristic maps.} \IT: \CCa-subdivision \cite{Catmull-1978-CC}; 
   \IB: adjustable speed subdivision \cite{Karciauskas:2008:ASS}
   with $\gl:=\frac{1}{2}$.   
   }
   \label{fig:spchar}
\end{figure}
\def\wid{0.22\linewidth}
\begin{figure}[h]
   \centering
   \subfigure[\cite{Catmull-1978-CC}]{
   \includegraphics[width=\wid]{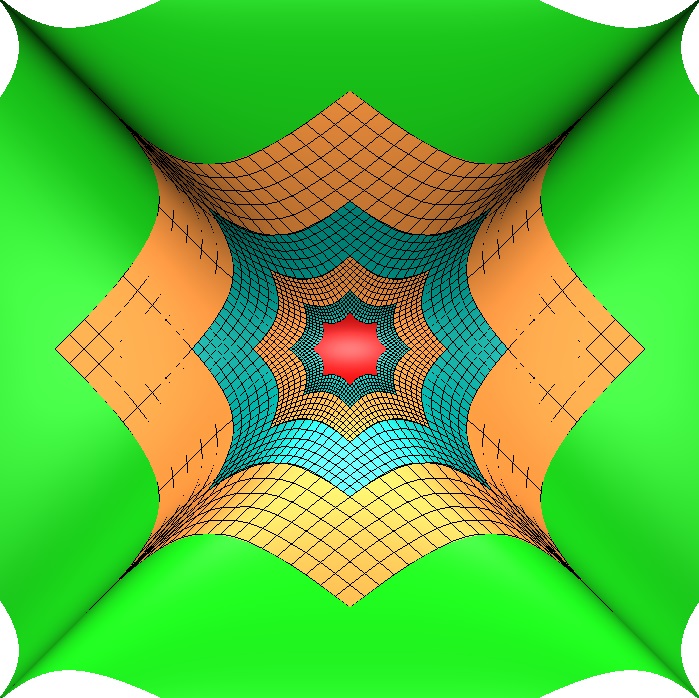}
   \includegraphics[width=\wid]{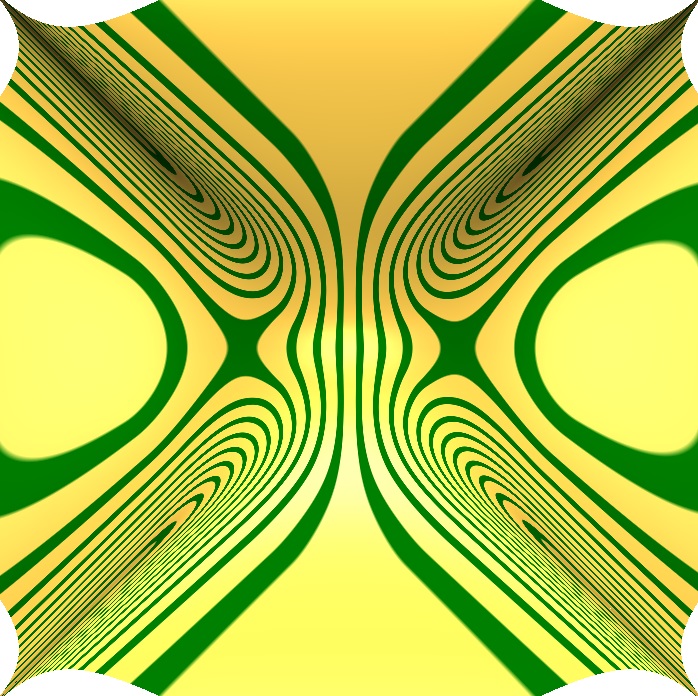}}
   \hskip0.0cm
   \subfigure[\cite{Karciauskas:2008:ASS}]{
   \includegraphics[width=\wid]{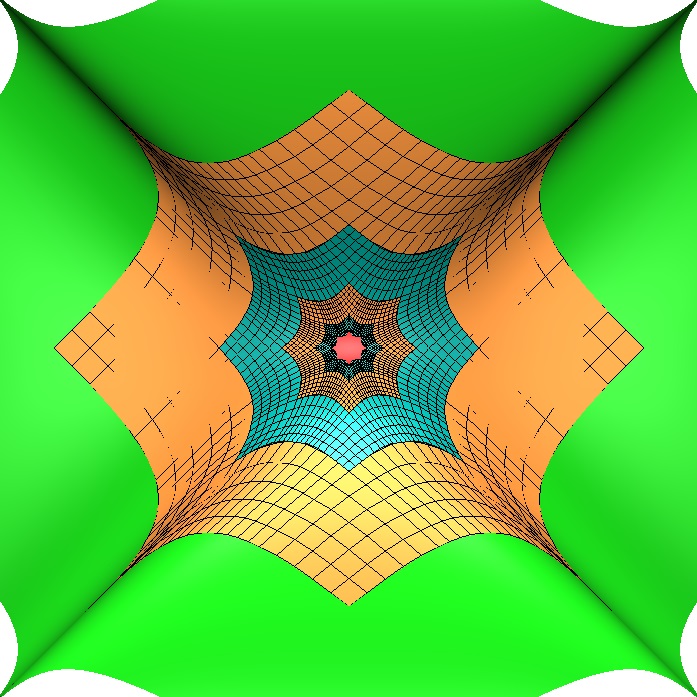}
   \hskip0.1cm
   \includegraphics[width=\wid]{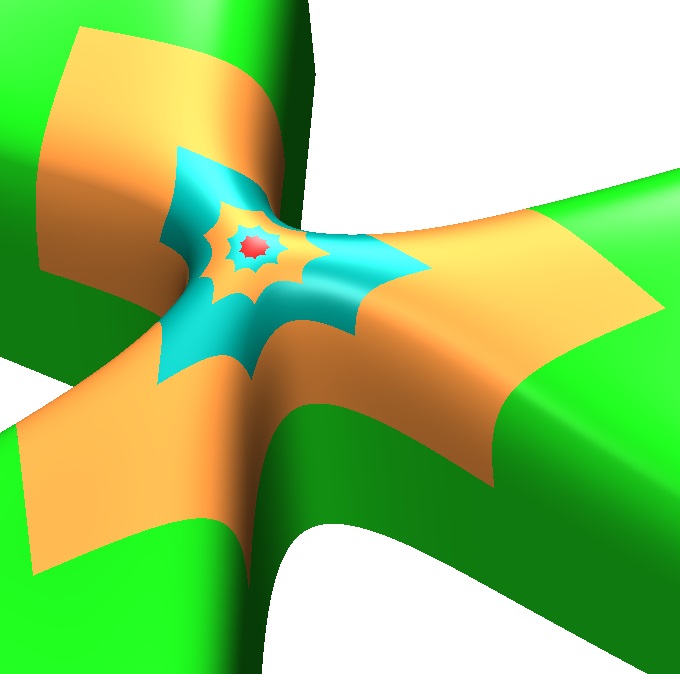}}
   \caption{
   \chg{
   Four guided rings are generated before adding the
   \textcolor{red}{red} cap. 
   The  \gss\ in
   (a) leverages the characteristic map of
   \cite{Catmull-1978-CC} while  (c) uses the more uniform
   contraction of \cite{Karciauskas:2008:ASS}.}
   (a,b \IL) net from \figref{fig:ccmeshes}e, 
   (b \IR) net from \figref{fig:ccmeshes}f.
   }
   \label{fig:spn89}
\end{figure}

\def\wid{0.4\linewidth}
\def\widd{0.45\linewidth}
\def\widdd{0.9\linewidth}
\begin{figure}
  \centering
   \subfigure[d.o.f.\ of bi-5 $C^2$ ring]{
   \begin{overpic}[scale=.23,tics=10]{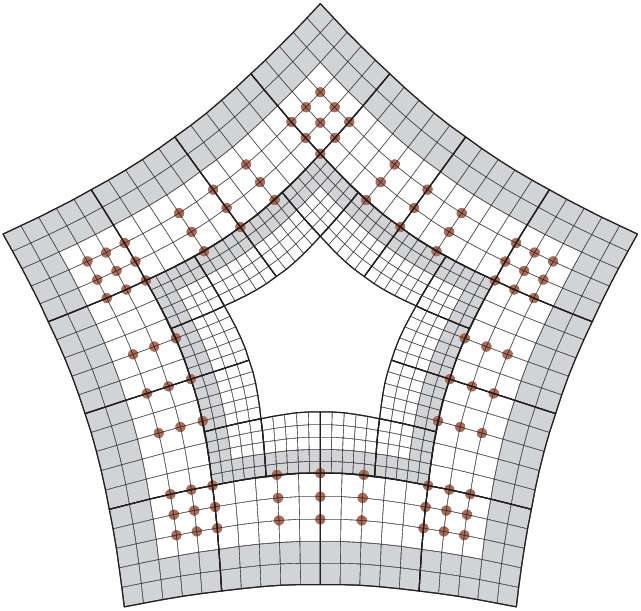}
        \put (87,53) {$1$}
	\put (71,57) {$2$}
	\put (63,58) {$3$}
   \end{overpic}
   }
   \subfigure[$\gfree$ and $\gfree$ of refined]{
   \includegraphics[width=\widd]{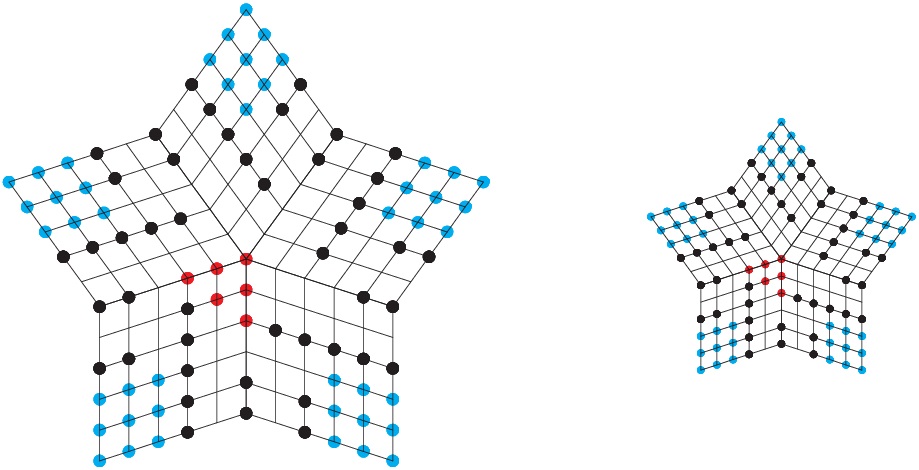}
   }
  \caption{Structure of refinable functions on guided subdivision
  surfaces. 
  (a) Degrees of freedom (brown bullets) in the $C^2$ bi-5 ring
  not influenced by the refined $\gfree$.
   The BB-coefficients in the gray layers are defined by 
   $C^2$ expansion from the surrounding surface.
  (b) A guide set $\gfree$ exist at each refinement step.
   }
   \label{fig:refine}
\end{figure}

\subsection{Refinement for functions on guided subdivisions surfaces}
\label{sec;refine}
With the shape of the subdivision surface determined
by the \cnet\ via $\gfree$ of the guide, 
here we define a nested space of refinable functions on the surface.
The combinatorial layout of the functions is identical to
that of the surfaces.
For example, each refinement of the bi-5 construction yields $18n$ new
degrees of freedom.
In \figref{fig:refine}a, the BB-coefficients determined by 
$C^2$ extension inwards are underlaid in gray and 
the $18n$ free coefficients of the
outermost subdivision ring that is no longer influenced 
by a once-refined set $\gfree$ are shown as brown bullets.
Functions corresponding to the markers 1,2,3
are displayed in \figref{fig:regfunc},b,c,d.
(Standard bi-5 respectively bi-6 spline subdivision with triple knots
can be applied to these rings in subsequent refinements.)
In addition to the $18n$ new degrees of freedom,
each subdivision step offers $\NN$ degrees of freedom corresponding to 
the set $\gfree$.
If the identity function is to be represented, the refined 
set $\gfree$ is obtained 
from its coarser predecessor \figref{fig:refine}b
via de Casteljau's algorithm.

While eigendecomposition can be used to obtain
finite expressions for computations on subdivision
surfaces, the considerable number of terms 
make us think that most numerical computations
are better served by computing with the hybrid 
representation
after a suitable number of refinement steps.
\def\widk{0.19\linewidth}
\def\wid{0.23\linewidth}
\begin{figure}[h]
   \centering
   \subfigure[filling]{
   \includegraphics[width=\widk]{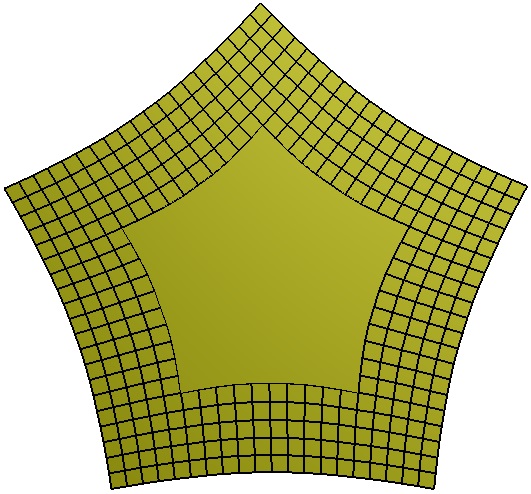}}
   \subfigure[$f_1$]{
   \includegraphics[width=\wid]{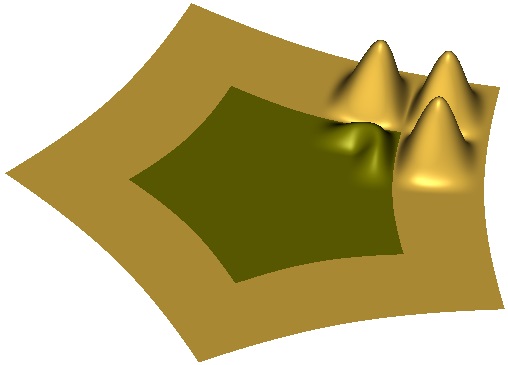}}
   \subfigure[$f_2$]{
   \includegraphics[width=\wid]{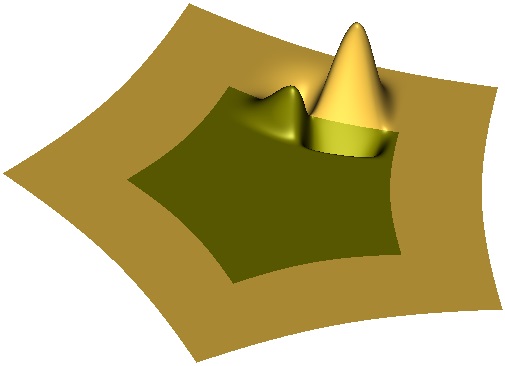}}
   \subfigure[$f_3$]{
   \includegraphics[width=\wid]{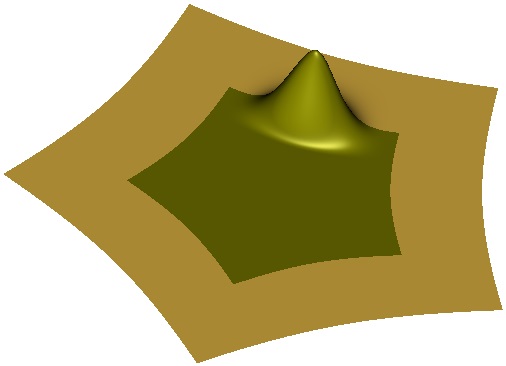}}
   \caption{
   (a) BB-coefficients of the innermost ring from \figref{fig:refine}a
   and the filling by subdivision.
   (b), (c), (d)
   Bi-5 functions $f_j$ with the meaning of the 
   subscripts indicated in \figref{fig:refine}a.
   }
   \label{fig:regfunc}
\end{figure}

\section{Conclusion}
The new \gss s offer an
automatic conversion of quad meshes with irregular vertices
into $C^2$ surfaces of good shape and built-in refinability.
A hybrid surface alternatively uses finitely many polynomial pieces
that preserve the shape
but are more readily amenable to subsequent computations on the surface.
The construction of \gss s is conceptually simple,
and has been implemented and tested on challenging examples. 
The eigen-structure of this class of subdivision algorithms 
is determined by the guide and fully analyzed.
The speed of contraction can be adjusted without harming the shape.

%
%

\bibliographystyle{eg-alpha-doi}
\bibliography{p}

\section*{A. Appendix}

Let
\[
\begin{aligned}
\dap_1:=& 4+8\ac+5\ac^2,\ \dap_2:=4+5\ac,\ \dap_3:=36+45\ac+5\ac^2,\\
\dap_4:=& 20+36\ac+9\ac^2-15\ac^3,\ \dap_5:=32+48\ac+15\ac^2,\\
\dap_6:=& 16+36\ac+36\ac^2+15\ac^3,\ \dap_7:=32+64\ac+72\ac^2+48\ac^3+15\ac^4.
\end{aligned}
\]
Then
\begin{align*}
\bpg_{30}:=&\frac{2}{3}\bpg_{10}+\frac{c-5}{3\ac}\bpg_{20}+\frac{5}{6\ac}\big(\bpg_{21}+\bpa_{21}\big);
\\
\bpa_{32}:=& \bpg_{32}-\frac{\dap_6}{3\dap_1}\big(\bpg_{11}-\bpa_{11}\big)
 +\frac{\dap_7}{3\ac\dap_1}\big(\bpg_{21}-\bpa_{21}\big)
 -\frac{\dap_6}{3\ac\dap_1}\big(\bpg_{22}-\bpa_{22}\big)\\
 & -\frac{\ac\dap_2}{2\dap_1}\big(\bpg_{42}-\bpa_{42}\big)
 +\frac{\ac^2}{2\dap_1}\big(\bpg_{52}-\bpa_{52}\big);
\\
\bpg_{31}:=& \frac{2\ac}{15}\bpg_{00}+\frac{8\ac}{15}\bpg_{10}+\frac{2}{3}\bpg_{11}
 -\frac{25+16\ac^2}{15\ac}\bpg_{20}\\
 & +\frac{1+2\ac}{6\ac}\bpg_{21}+\frac{3}{2\ac}\bpa_{21}
+\frac{1}{3\ac}\big(\bpg_{22}-\bpa_{22}\big)+\frac{2\ac}{5}\bpg_{40};
\\
\bpg_{41}:=& \frac{2\ac}{15\dap_1}\big(\ac\dap_2\bpg_{00}-\dap_3\bpg_{10}+5\dap_2\bpg_{11}\big)\\
& +\frac{4\dap_4}{15\dap_1}\bpg_{20}-\frac{\dap_5}{6\dap_1}\bpg_{21}
-\frac{\ac(16+25\ac)}{6\dap_1}\bpa_{21}+\frac{\dap_2}{3\dap_1}\big(\bpg_{22}-\bpa_{22}\big)\\
& +(1+\frac{3\ac}{5})\bpg_{40}+\frac{\dap_2}{4\dap_1}\big(\bpg_{42}-\bpa_{42}\big)
  +\frac{\ac}{5}\bpg_{50}+\frac{\ac}{4\dap_1}\big(\bpg_{52}-\bpa_{52}\big);
\\
\bpg_{51}:=& \frac{2\ac^2}{3\dap_1}\big(\ac\bpg_{00}+(3\ac-5)\bpg_{10}+5\bpg_{11}\big)\\
& +\frac{\ac}{6\dap_1}\big(-16\ac^2\bpg_{20}+5(\ac-4)(\bpg_{21}-\bpa_{21})+10(\bpg_{22}-\bpa_{22})\big)\\
& -\ac\bpg_{40}+\frac{5\ac}{4\dap_1}\big(\bpg_{42}-\bpa_{42}\big)
  +(1+\ac)\bpg_{50}+\frac{4+3\ac}{4\dap_1}\big(\bpg_{52}-\bpa_{52}\big).
\end{align*}
For $i=3,4,5$, we get $\bpa_{i1}$ from $\bpg_{i1}$ by swapping
$\bpg$ with $\bpa$.

\section*{B. Appendix: hybrid caps of degree bi-6}
The bi-6 caps are internally $G^1$ according to
\begin{equation}
\partial_v\bfa+\partial_v\bfg-2\ac(1-u)^2\partial_u\bfg=0.
\label{c1g1const}
\end{equation}
and they are $C^1$-connected to the last guided bi-6 surface ring. 
With the notations and indexing of \figref{fig:bi6tanen}a in the 
guide construction of \secref{sec:guide}, the unconstrained
coefficient of the local solution to \eqref{c1g1const} 
are marked as bullets in \figref{fig:bi6tanen}a.
\begin{figure}[h]
   \centering
   \subfigure[indexing]{
   \begin{overpic}[scale=.2,tics=10]{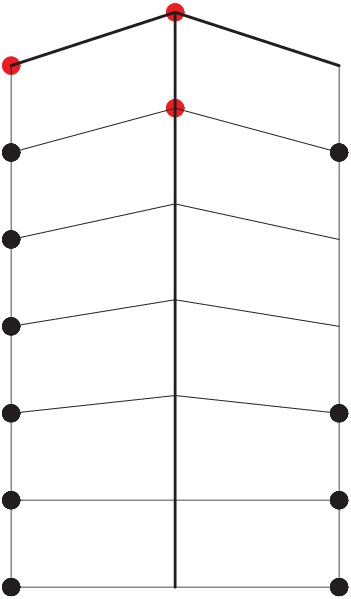}
        \put (25,100) {$00$}
	\put (-8,90) {$01$}
	\put (58,90) {$01$}
	\put (-8,75) {$11$}
	\put (58,75) {$11$}
	\put (-8,60) {$21$}
	\put (58,60) {$21$}
	\put (-8,0) {$61$}
	\put (59,0) {$61$}
	\put (24,82) {$10$}
	\put (24,67) {$20$}
	\put (24,51) {$30$}
	\put (24,1) {$60$}
	\put (-10,40) {$\bpg$}
	\put (62,40) {$\bpa$}
	\end{overpic}}
   \hskip1.5cm	
   \subfigure[reparameterization $\hat\gs$]{
   \begin{overpic}[scale=.3,tics=10]{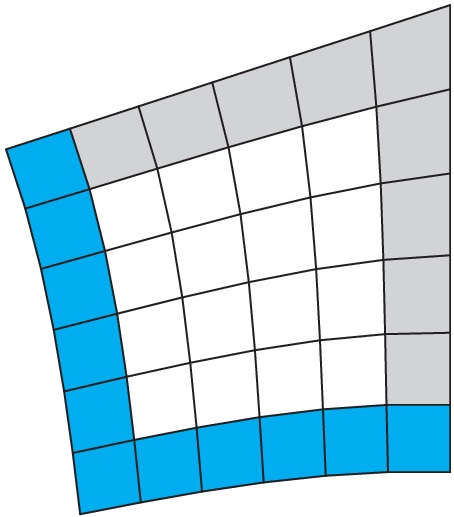}
        \end{overpic}}	
   \caption{
   (a) Local symmetric indexing: the unconstrained BB-coefficients are 
   marked as \textcolor{red}{red} and black disks.
   (b) bi-6 reparameterization $\hat\gs$ for sampling the guide.   
   }
   \label{fig:bi6tanen}
\end{figure}
The interactions between the $n$ local $G^1$ systems of equations
at the irregular point $\bpa_{00}:=\bpg_{00}$ are resolved
by selecting three BB-coefficients in one sector
(\textcolor{red}{red} disks in \figref{fig:bi6tanen}a)
to define the tangent plane at the irregular point
and define the corresponding BB-coefficients in
other sectors recursively as 
\begin{equation}
   \bpa_{10}:=\bpg_{10}\ ,\quad 
   \bpa_{01}:= -\bpg_{01}+2\ac \bpg_{10}+2(1-\ac)\bpg_{00}\ .
\label{cenlin}
\end{equation}
The explicit formulas for the dependent points of the local solution
are 
\begin{equation}
\label{bi6solv}
\begin{aligned}
\bpg_{20}:=& \frac{3}{5\ac}\big(\bpg_{11}+\bpa_{11}\big)+
\big(\frac{6}{5}-\frac{6}{5\ac}\big)\bpg_{10}-\frac{1}{5}\bpg_{00}\ ;\\
\bpa_{30}:=& \frac{1}{20}\big(\bpg_{00}+\bpg_{60}\big)-\frac{3}{10}\big(\bpg_{10}+\bpg_{50}\big)
+\frac{3}{4}\big(\bpg_{20}+\bpg_{40}\big)\ ;\\
\bpg_{40}:=& \frac{1}{2}\big(\bpg_{41}+\bpa_{41}\big)+\frac{\ac}{15}\big(\bpg_{50}-\bpg_{60}\big)\ ;\\
\bpg_{50}:=& \frac{1}{2}\big(\bpg_{51}+\bpa_{51}\big)\ ,\quad \bpg_{60}:=\frac{1}{2}\big(\bpg_{61}+\bpa_{61}\big)\ ;\\
\bpa_{21}:=& -\bpg_{21}-\frac{\ac}{15}\bpg_{00}+\frac{2\ac}{5}\bpg_{10}+(2-\ac)\bpg_{20}+\ac\bpg_{40}
-\frac{2\ac}{5}\bpg_{50}+\frac{\ac}{15}\bpg_{60}\ ;\\
\bpa_{31}:=& -\bpg_{31}+\frac{1}{10}\bpg_{00}-\frac{3}{5}\bpg_{10}+\frac{3}{2}\bpg_{20}+\frac{3-\ac}{2}\bpg_{40}
+\frac{1-\ac}{10}\big(\bpg_{60}-6\bpg_{50}\big)\ .
\end{aligned}
\end{equation}
A bi-6 reparameterization $\hat\gs$ for sampling the guide is 
rotationally and sector bisectrix symmetric 
and the outer BB-coefficients (blue underlay in \figref{fig:bi6tanen}b)
$C^1$-extend the characteristic ring of \CCa-subdivision.
This leaves $14$ free parameters that are set to minimize the 
sum up to fifth derivatives,
$
\int^1_0 \int^1_0
   \sum_{i+j=5, i,j\ge 0}
      \frac{5!}{i!j!} (\partial^i_s \partial^j_t f(s,t))^2 ds dt
$.
Applying De Casteljau at $u=\gl^\ell=v$ to the sector of the guide
and sampling 
$\pc{\ggg^k\gl^r\circ{\hat\gs}}{6}{4\times 4}{}$ 
at all four corners of $\hat\gs$
to form the bi-6 patch according to \figref{fig:herm56col}b
implies that the resulting cap $\sixcap^k$
inherits the unique quadratic expansion of the guide.
$C^1$-extending the last guided ring leaves
$\bpg_{21}$, $\bpg_{31}$, $\bpa_{21}$, $\bpa_{31}$
(see \figref{fig:bi6tanen}a)
to be the least squares best fit to 
the corresponding BB-coefficients of $\sixcap^k$ and 
$\sixcap^{k+1}$.
As for $\ggg$ pretabulation simplifies practical computation.

Although the construction is formally only $G^1$,
it is curvature continuous at the center and this 
partly accounts for its good shape.

\section*{C. Appendix: Eigenanalysis of the subdivision algorithm}
Since the central point stays fix, the dominant eigenvalue is $1$.
\figref{fig:eigenstr} lists the indices of the other
unconstrained control points $\gfree$ of the guide $\ggg$ 
(recall that \textcolor{red}{red} bullets labeled $1,\ldots,5$ are only
unconstrained for sector $k=0$ and that for $n=3,6$ there is an additional
degree of freedom at the location marked by a circled cross
in \figref{fig:gdtanen}b). 
With the abbreviations
\begin{align*}
   n^* &:= \begin{cases} 
   n+1, & \text{ for } n \in \{3,6\},\\
   n, & \text{ else },
   \end{cases},
   \quad \mmm := 6+n^*, 
   \\
   \kbar &:=6+k;\quad \khat:=\mmm+k;\quad 
   dn^+:=dn+\khat,\ d=1,\ldots,16,
\end{align*}
we label the $\NN := 17n+\mmm-1$ elements of $\gfree$
as illustrated in  \figref{fig:eigenstr}.
\begin{figure}[h]
   \centering
   \begin{overpic}[scale=.6,tics=10]{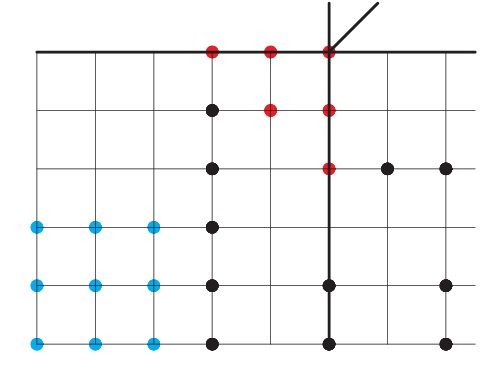}
        \put (53,66) {$1$}
	\put (41,66) {$3$}
	\put (62,53) {$2$}
	\put (50,53) {$4$}
	\put (62,41) {$5$}
	\put (38,53) {$\kbar$}
	\put (38,41) {$\khat$}
	\put (70,43) {$\kbar+1$}
	\put (92,41) {$\khat+1$}
	\put (60,19) {$\nnp$}
	\put (61,0) {$2\nnp$}
	\put (83,19) {$5\nnp$}
	\put (83,0) {$8\nnp$}
	\put (43,31) {$3\nnp$}
	\put (43,19) {$4\nnp$}
	\put (43,0) {$7\nnp$}
	\put (27,31) {$6\nnp$}
	\put (25,19) {$10\nnp$}
	\put (25,0) {$13\nnp$}
	\put (16,31) {$9\nnp$}
	\put (14,19) {$12\nnp$}
	\put (12,0) {$15\nnp$}
	\put (-1,31) {$11\nnp$}
	\put (-1,19) {$14\nnp$}
	\put (-1,0) {$16\nnp$}
	\put (2,42) {$\bp^k$}
	\put (93,29) {$\bp^{k+1}$}
	\end{overpic}
   \caption{
   Indices for the eigen-analysis.
   }
   \label{fig:eigenstr}
\end{figure}

After application of de Casteljau at $\gl$,
the linearly-reparameterized bi-5 patches satisfy
the \emph{unchanged} constraints (\ref{g1const}) and (\ref{g2const});
this was intended and is verified by inspection. 
The mapping of $\gfree$ to its next-level counterpart
yields systems of linear eigen-equations $\xeq^s_i$ 
in unknowns $x^s_j$, $i,j=1,\ldots,\NN$,
that form the eigenvectors 
corresponding to eigenvalues $\gl^s$, $s=1,\ldots,10$.
Solving the large and highly underconstrained
systems with   
symbolic entries $\gl$ defies the capabilities of Maple,
hence requires some careful guesses
based on an observed pattern for concrete $n$ and 
$\gl$. The underconstrained systems are reduced
by judiciously setting various $x^s_i$ to zero and solving,
for the specific $\gl:=\frac{1}{2}$, a subset of 
(system, variables)-pairs ($\xeq^s_i$, $x^s_j$).
We abbreviate 
\[
a:b\quad := a, a+1,\ldots,b,
\qquad
\KK(\ga,\gb) := \chg{(\mmm + \ga n: \mmm+\gb n -1)}.
\]
For $s=1:10$, 
we list parameters set to zero, pairs of equations and variables, and the 
free parameters that will characterize the eigenvectors:
\begin{align}
\begin{matrix}
s= & x^s_{1:j}=0 & (\xeq^s_{i:\NN},x^s_{i:\NN}) 
   & x^s_k \text{ free}\\
   & j= & i= & k= \\
1 &  & 3 & 1,2\\
2 & 2 & 6 & 3:5\\
3 & 5 & \mmm &  6:m-1\\
4 & \mmm-1 & 2n+\mmm & \KK(0,2)\\
5 & 2n+\mmm-1 &  (I_5,J_5) & \KK(2,4)\cup \KK(5,6)\\
6 & 4n+\mmm-1& 7n+\mmm & \KK(4,7) \\
7 & 7n+\mmm-1& (I_7, I_7) & \KK(7,8)\cup \KK(9,11)\\
8 & 11n+\mmm-1& 14n+\mmm & \KK(11,14)\\
9 & 14n+\mmm-1& 16n+\mmm & \KK(14,16)\\
10 & 16n+\mmm-1 &  & \KK(16,17)
\end{matrix}
\label{eq:free}
\end{align}
For $s=5$, system indices and variables differ:
$ I_5 := \KK(5,17)$ and the variables are the union of labels
$ J_5 := \KK(4,5) \cup\ \KK(6,17)$.
For $s=7$,  $I_7 := \KK(8,9)\cup\  \KK(11,17)$.
The system of equations $\xeq^{10}_i$ is not listed since it
is satisfied by setting $x^{10}_{1:j}=0$.
The solutions of these systems 
are substituted into the initial systems with 
symbolic $\gl$ to verify that they solve the equations for
any choice of $\gl$. 
This yields explicit formulas for the eigenvectors
in terms of the free parameters listed in the
right column of \eqref{eq:free}.
The eigenvectors, one per free parameter, span the eigenspace 
with the eigenvalues listed in Table \eqref{eq:eigvals}.


\end{document}